\tikzstyle{block}=[draw opacity=0.7,line width=1.4cm]
\newtheorem{theorem}{Theorem}
\newtheorem{definition}{Definition}
\newtheorem{example}{Example}
\newtheorem{lemma}{Lemma}
\newtheorem{remark}{Remark}
\numberwithin{equation}{section}
\numberwithin{theorem}{section}
\numberwithin{lemma}{section}
\numberwithin{corollary}{section}
\numberwithin{definition}{section}
\numberwithin{example}{section}
\numberwithin{remark}{section}
\numberwithin{property}{section}
\numberwithin{proposition}{section}
\newcommand{\NS}[3][1]{#2_#1,\ldots,#2_#3}
\newcommand{\lift}[3][\pi]{\xymatrix{#2 \ar@{~>}[r]_{#1} & #3}}
\newcommand{\Minf}{\mathcal{M}_\infty}
\newcommand{\Li}[2][]{\mathcal{L}_{#1}(#2)}
\newcommand{\Ad}[2]{\mathrm{Ad}[#1](#2)}
\newcommand{\diag}[2][]{\mathrm{diag}_{#1}\left[ #2 \right]}
\newcommand{\N}[1]{\mathcal{N}(#1)}
\newcommand{\U}[1]{\mathbb{U}(#1)}
\newcommand{\I}{\mathbb{1}}
\newcommand{\RR}{\mathbb{R}}
\newcommand{\CC}{\mathbb{C}}
\newcommand{\ZZ}{\mathbb{Z}}
\newcommand{\Rep}[2]{\mathrm{Rep}(#1,#2)}
\newcommand{\TT}[1][1]{{\mathbb{T}^{#1}}}
\newcommand{\disk}[1][2]{{\mathbb{D}^{#1}}}
\newcommand{\Cliff}[1]{\mathrm{Cliff}(#1)}
\newcommand{\dist}[2]{\mathbf{\Delta}(#1,#2)}
\begin{document}
\title[Matrix Words]{Local Deformation of Matrix Words}
\author{Fredy Vides}
\address
{Department of Mathematics and Statistics \newline
\indent The University of New Mexico, Albuquerque, NM 87131, USA.}
\email{vides@math.unm.edu}
\keywords{Matrix homotopy, matrix path, matrix compression, extension problem, joint spectrum, Pseudospectra.}

\subjclass[2010]{47N40, 15A60, 15A24, 47A20 (primary) and 15A83 (secondary).} 

\date{\today}

\begin{abstract}
In this document we study some local deformation properties of matrix representations of the universal C$^*$-algebras 
denoted by $\mathbb{I}^{m}_\varepsilon[p_1,\ldots,p_m]$ and $\mathbb{S}^{m-1}_\varepsilon[p_1,\ldots,p_m]$, and that we call 
{\bf Semi-Soft Cubes} and {\bf Semi-Soft Spheres} respectively.

We will use some C$^*$-algebraic technology to study the local deformation properties of matrix words in particular representations of Semi-Soft 
Cubes and Spheres, we will then use these results to study the local deformation properties of generic matrix equations on words.

Some geometrical aspects of the local deformation of matrix words will be addressed, and some connections with matrix numerical analysis and computational 
physics will be outlined as well.
\end{abstract}

\maketitle

\section{Introduction}
\label{intro}

In this document we study some local deformation properties of matrix representations of the universal C$^*$-algebras 
$\mathbb{I}^{m}_\varepsilon[p_1,\ldots,p_m]$ and $\mathbb{S}^{m-1}_\varepsilon[p_1,\ldots,p_m]$ that we call in this document {\bf Semi-Soft Cubes} and 
{\bf Semi-Soft Spheres} respectively, and that will be defined in \S\ref{Applications}.

We will use some C$^*$-algebraic technology to study the local deformation properties of matrix words in particular representations of Semi-Soft 
Cubes and Spheres, we will then use these results to study the local deformation properties of generic matrix equations on words.

\subsubsection{C*-Algebras and Topologically Controlled Linear Algebra.}

%

 Given $\delta>0$, a function $\varepsilon:\RR\to\RR^+_0$, a finite set of functions $F \subseteq C(\TT,\disk)$ and two unitary matrices $u,v\in M_n$ such that 
 $\|uv-vu\|\leq \delta$, we say that the set 
$F$ is $\delta$-controlled by $\mathrm{Ad}[v]$ if the diagram,
\begin{equation}
\xymatrix{
C^*(u,v) & C^*(u) \ar[l] \ar[d]_{\mathrm{Ad}[v]}  & \{u\} \ar[d]_{\mathrm{Ad}[v]} \ar[l]^{\imath} \ar[rd]^{f}_{\approx_{\varepsilon(\delta)}} &  \\
& C^*(vuv^*) \ar[lu]  & \{vuv^*\} \ar[l]^{\imath} \ar[r]_{f} & \N{n}(\disk)
}
\label{controlled_stuff}
\end{equation}
commutes up to an error $\varepsilon(\delta)$ for each $f\in F$.

In \cite{Vides_homotopies} we found some connections between the previously described 
C$^*$-algebras and {\bf \em topologically controlled linear algebra} (in the sense of Freedman and Press 
\cite{CLA_Freedman}), which can be roughly described as the study of the relations between matrix sets and 
smooth manifolds that is performed by implementing techniques from geometric topology in the study of matrix approximation problems. The connections 
we found can be outlined with the help of diagram \ref{controlled_stuff} together with matrix embeddings of the form 
\begin{equation}
 C^*(\NS{U}{N}) \hookrightarrow C^*(U,V),
\label{representation_diagrams_1}
 \end{equation}
for some $U,V,\NS{U}{N}\in \U{n}$ such that, $\|UV-VU\|\leq \varepsilon$, $U_j\in C^*(U,V)$ and $U_jU_k=U_kU_j$, $1\leq j,k\leq N$. By modifying 
\ref{representation_diagrams_1} appropriately, one can also obtain particular matrix embeddings of the form 
\begin{equation}
 C^*(\NS{H}{{N}},\NS{K}{N}) \hookrightarrow C^*(\hat{H}_1,\hat{H}_2,\hat{K}_1,\hat{K}_2),
\label{representation_diagrams_2}
 \end{equation}
where $U=\hat{H}_1+i\hat{K}_1$, $V=\hat{H}_2+i\hat{K}_2$ and $U_j=H_j+iK_j$, $1\leq j\leq N$ are the cartesian decompositions of the unitary matrices 
in \ref{representation_diagrams_1}.

\subsubsection{Local Matrix Homotopies and Matrix Words}  \label{geometry}Given $\delta>0$, a function $\varepsilon:\RR\to \RR^+_0$ and two matrices $x,y$ in a set $S \subseteq M_n$ such that 
$\|x-y\|\leq \delta$, by a {\bf \em $\varepsilon(\delta)$-local matrix homotopy} between $x$ and $y$, we mean a 
matrix path $X\in C([0,1],M_n)$ such that $X_0=x$, $X_1=y$, $X_t\in S$ and $\|X_t-y\|\leq \varepsilon(\delta)$ for each $t\in [0,1]$.

\begin{definition}[Local matrix deformations $x \rightsquigarrow_{\varepsilon} y$]
 Given two matrices $x,y\in M_n$ we write $x\rightsquigarrow_{\varepsilon} y$ if there is $\varepsilon$-local matrix homotopy 
 $X\in C([0,1],M_n)$ between $x$ and $y$.
\end{definition}

As a consequence of some of the results reported in \S\ref{main_results} and the constructive nature of their proofs, we have learned about the convenience 
of what we call {\bf joint spectral clustering} when it comes to the computation of local deformations of $m$-tuples of normal matrices.

\begin{example}
For an example of spectral clustering, lets us consider two hermitian matrices $X,Y\in \CC^{400\times 400}$ such that 
$\|[X,Y]\|=O(1\times 10^{-3})$, let us now form the matrix $A=X+iY$ that we can use to compute the {\bf joint pseudospectrum} 
$\Lambda_\varepsilon(X,Y)=\Lambda_\varepsilon(A)$ of 
$X$ and $Y$ in the sense of Loring \cite{Loring_pseudospectra} that is shown in Figure \ref{picture_1}, we can also use the Ritz values of $A$ to compute an approximate 
(Chebyshev) minimal polynomial $p_{\delta,A}$ for $A$. The corresponding approximate polynomial lemniscates of $p_{\delta,A}$ together with a searching/interpolating 
interpolating Chebyshev type grid ares shown in Figures \ref{picture_2} and \ref{picture_3}.

\begin{figure}[!htb]
\centering
 \includegraphics[scale=0.34]{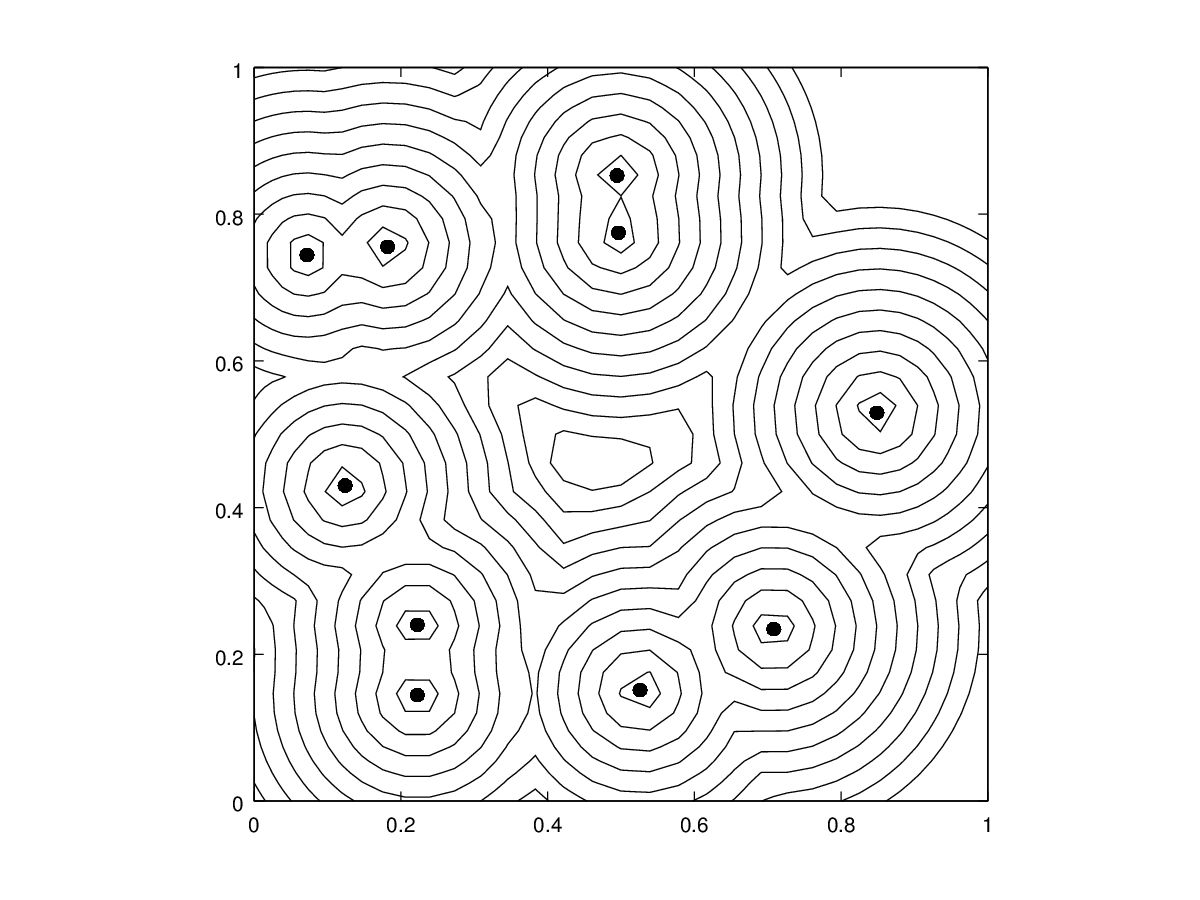}
 \caption{A section of the $\varepsilon$-Pseudospectrum $\Lambda_{\varepsilon}(A)$ of $A\in \CC^{400\times 400}$.}
 \label{picture_1}
 \end{figure}
  \begin{figure}[!htb]
 \centering
\includegraphics[scale=0.34]{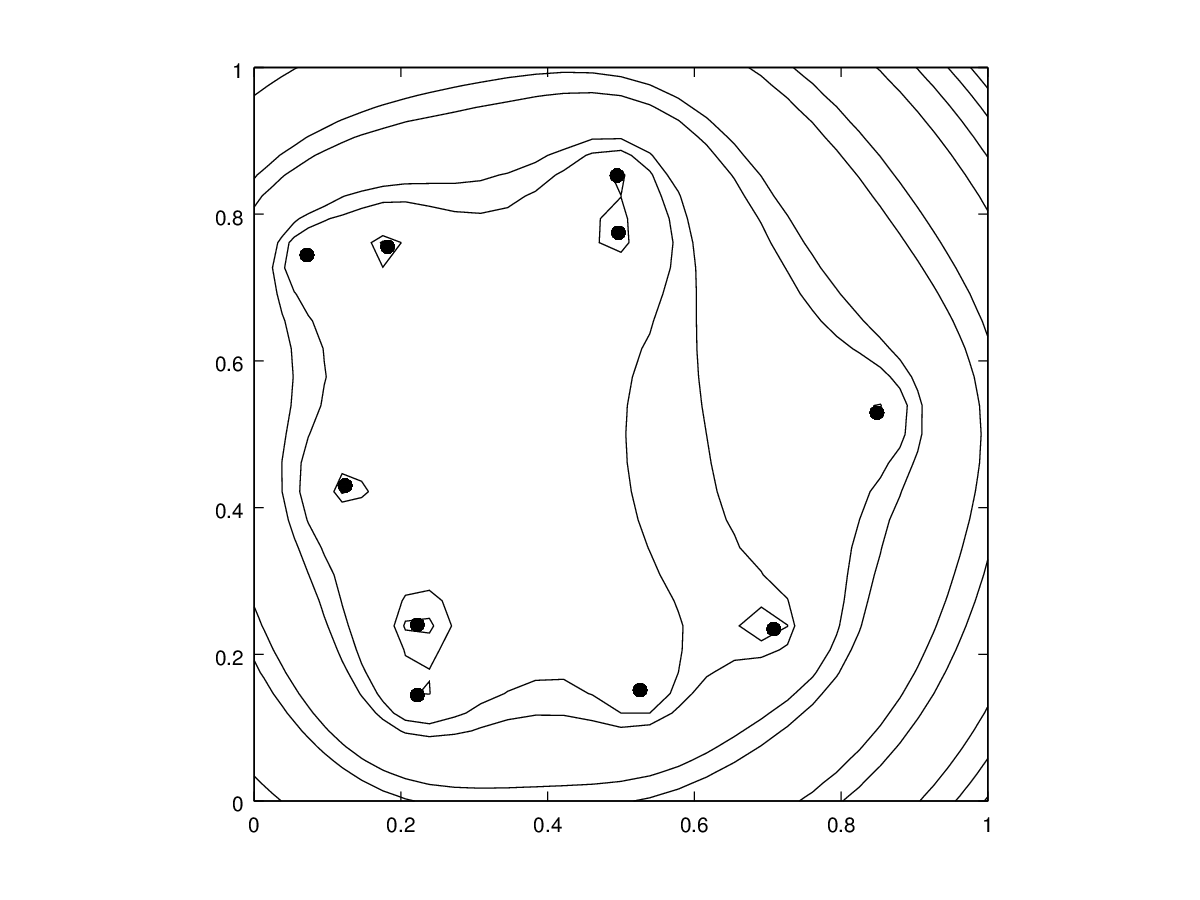}
\caption{Approximate polynomial lemniscates for an approximate minimal polynomial $p_{A,\delta}(z)$ for $A\in \CC^{400\times 400}$, 
where $\deg(p_{A,\delta}(z))= 10$ and $\|p_{A,\delta}(z)\|=O(1\times 10^{-3})$.}
\label{picture_2}
 \end{figure}
 \begin{figure}[!htp]
\centering
\includegraphics[scale=0.34]{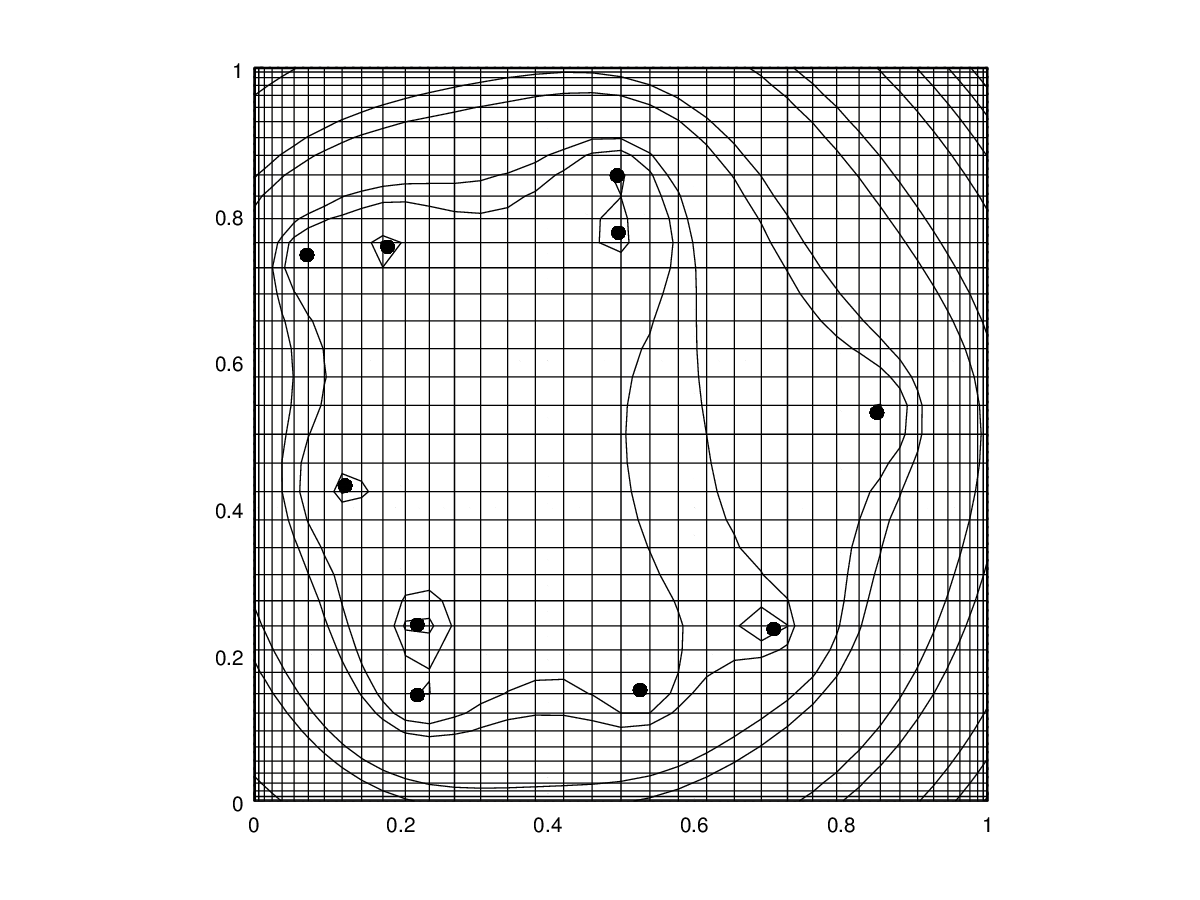}
\caption{Approximate polynomial lemniscates and interpolating Chebyshev type grid for an approximate minimal polynomial $p_{A,\delta}(z)$ for 
$A\in \CC^{400\times 400}$, where $\deg(p_{A,\delta}(z))= 10$ and $\|p_{A,\delta}(z)\|=O(1\times 10^{-3})$.}
\label{picture_3}
  \end{figure}
 
\end{example}

\subsubsection{Artificial joint spectral clustering} \label{clustering} 
Given $\varepsilon>0$, a matrix $A\in\CC^{N\times N}$, a region $\Omega^2\subseteq \CC\simeq \RR^2$ and a finite set of points $\tilde{\Omega}^2\subseteq \Omega^2$, by 
{\bf Pseudospectral scanning} we mean a triple $(U_{\varepsilon}(A,\tilde{\Omega}^2),\Lambda_\varepsilon(A,\tilde{\Omega}^2),V_{\varepsilon}(A,\tilde{\Omega}^2))$ 
formed by a set $\Lambda_\varepsilon(A,\tilde{\Omega}^2):=\{\tilde{\sigma}_{\varepsilon,1},\ldots,\tilde{\sigma}_{\varepsilon,s}\}\subseteq \tilde{\Omega}^2$ together with two  
sets of matrices $U_{\varepsilon}(A,\tilde{\Omega}^2):=\{\tilde{U}_1,\ldots,\tilde{U}_s\}$ and 
$V_{\varepsilon}(A,\tilde{\Omega}^2):=\{\tilde{V}_1,\ldots,\tilde{V}_s\}$, such that 
\begin{equation}
 \|\tilde{U}_jA\tilde{V}_j-\tilde{\sigma}_{\varepsilon,j}\tilde{U}_j\tilde{V}_j\|\leq \varepsilon,
 \label{Pseudospectral_scanning_constraint}
\end{equation}
for each $1\leq j\leq s$.

Given $0<\delta\leq \varepsilon$, a polynomial matrix function $f\in \CC[x,y]$ and a pair of $\delta$-commuting 
hermitian matrices $X,Y$ in $M_n$ such that $\|f(X,Y)\|\leq\varepsilon$, using perturbation theory one can combine Friis-Rørdam techniques (presented 
\cite{Rordam_Lin_Thm} to derive an alternative proof of Lin's theorem), together with the Pseudospectral scanning techniques presented by T. A. Loring 
in \cite{Loring_pseudospectra} to obtain an approximate interpolating matrix polynomial (in the sense of \cite{Matrix_Poly_Dennis}) 
$\tilde{f}(x,y)$ for $f(x,y)$, such that there are two commuting hermitian matrices $\tilde{X},\tilde{Y}\in M_n$ that satisfy the constraints 
$\tilde{f}(\tilde{X})=[\tilde{X},\tilde{Y}]=0$, 
$\|f(\tilde{X},\tilde{Y})\|\leq \varepsilon$ and $\max\{\|X-\tilde{X}\|,\|Y-\tilde{Y}\|\}\leq \varepsilon$.

\begin{figure}[!htb]
\centering
 \includegraphics[scale=0.34]{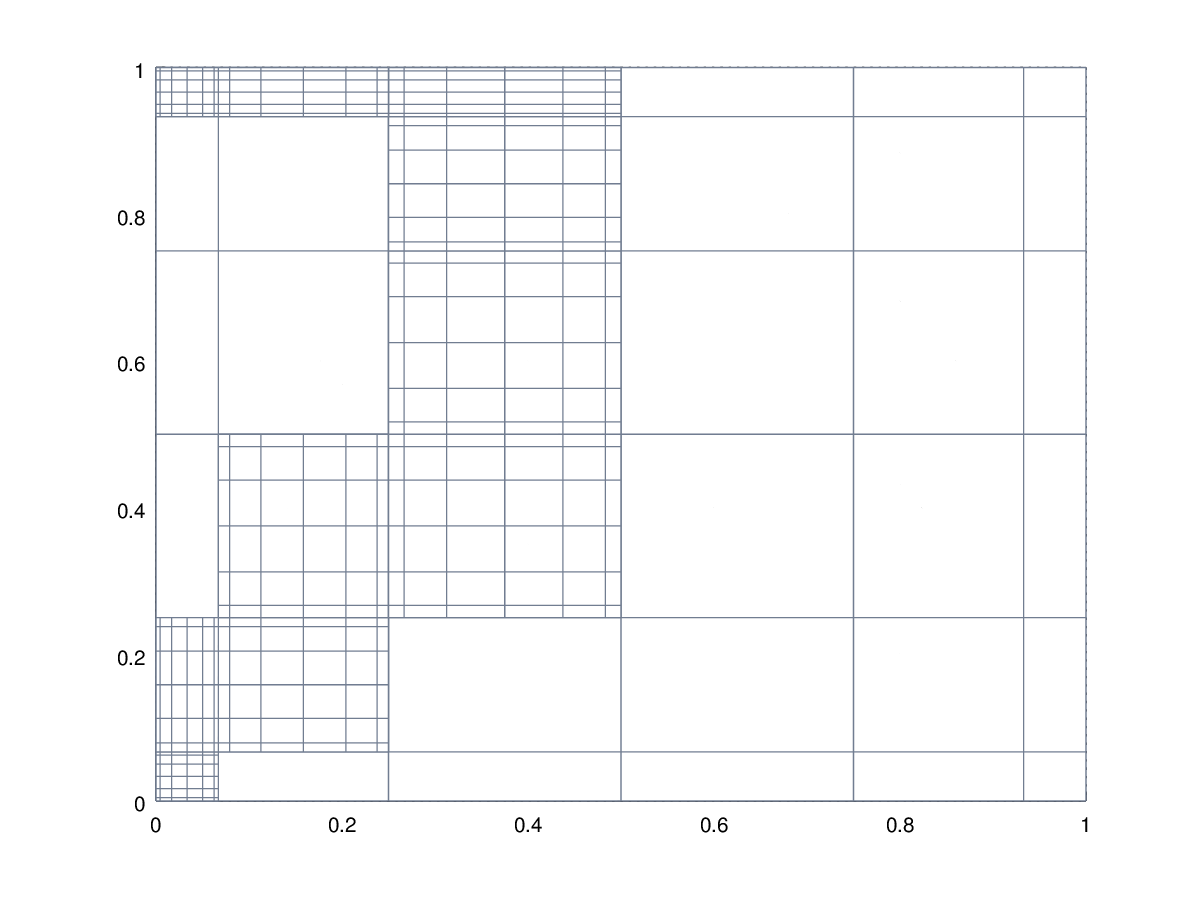}
 \caption{A locally refined interpolating grid for $[0,1]^2$.}
 \label{picture_4}
 \end{figure}

By combining matrix dilation techniques and joint Pseudospectral scanning of the estimated/preprocessed 
joint-Pseudospectral region in $\Omega_{\varepsilon}^m\subseteq \mathbb{R}^m$ of an $m$-tuple of almost commuting hermitian matrices
$X_1,\ldots,X_m$ in $M_n$, one can extend the interpolation method described in the previous paragraph to $\mathbb{R}^m$, the idea 
is essentially that by the results obtained by Loring and Exel in \cite{ExelLoringInvariants}, one can make (as long it is allowed by the nature of 
the matrix equations under study) {\em "surgical Pseudospectral cuts"} in $\tilde{\Omega}^m$ to throw away some {\em "bad spectral projectors"} 
with their corresponding invariant 
subspaces and joint-Pseudoeigenvalues in $\tilde{\Omega}^m$, in such a way that the geometry of the {\bf \em leftover} subregion of $\tilde{\Omega}^m$ 
is {\bf "nice enough"} to kill the Bott index (in the sense of \cite{Loring_assymptotic_K_theory}) of the corresponding 
{\em "$\varepsilon$-nearby/projected"} $m$-tuple $\tilde{X}_1,\ldots,\tilde{X}_m$ in $M_n$ corresponding
to the original matrices $X_1,\ldots,X_m \in M_n$.

By the results presented in \S\ref{main_results}, one can perform the Loring's Pseudospectral scanning in a pairwise fashion. Under the pairwise joint 
Pseudospectral scanning consideration, one can use grids like the one illustrated in Figure \ref{picture_3} to perform the Pseudospectral scanning. Building on 
the procedures presented in \S\ref{main_results}, the extension of Loring's Pseudospectral scanning 
techniques to Chebyshev type grids like the one presented in Figure \ref{picture_4}, by implementing and modifying some standard computational techniques for 
local refinement and multigrid pre- and pot-processing, seems promising in order to develope some computational tools for the study of problems in 
Graphene Nanotechnology (in the sense of \cite{NanoMesh1} and \cite{NanoMesh2}).

In this document we build on the techniques developed in \cite{Vides_dissertation} and \cite{Vides_homotopies}, to study the analytic local 
connectivity properties of particular representations in $\Minf$ of C$^*$-algebras generated by universal semialgebraic normal contractions, under 
commutativity preserving and semialgebraic constraints.

Some geometrical aspects of the local deformation of matrix words will be addressed in \S\ref{covering_varieties}, in particular we consider 
the {\bf curved} nature of local homotopies that preserve commutation relations. The main results will be presented in 
\S\ref{Applications}.

\section{Preliminaries and Notation}
\label{notation}


\begin{definition}[Semialgebraic Matrix Varieties]
\label{matrix_variety}
Given $J\in \ZZ^+$, a system 
of $J$ polynomials $\NS{p}{J}\in \Pi_{\braket{N}}=\mathbb{C}\Braket{\NS{x}{N}}$ in $N$ NC-variables $\NS{x}{N}\in \Pi_{\braket{N}}$ 
and a real number $\varepsilon\geq 0$, a particular matrix representation of the 
noncommutative semialgebraic set $\mathcal{Z}_{\varepsilon,n}(\NS{p}{J})$ 
described by 
\begin{equation}
 \mathcal{Z}_{\varepsilon,n}(\NS{p}{J}):=\Set{\NS{X}{N}\in M_{n} | \|p_j(\NS{X}{N})\|\leq \varepsilon, 1\leq j\leq J},
\end{equation}
will be called a {\bf $\varepsilon,n$-semialgebraic matrix variety} ($\varepsilon,n$-SMV), if $\varepsilon=0$ we can refer to the set as 
a {\bf matrix variety}. 
\end{definition}

Given any two matrices $X,Y\in M_n$ we will write $[X,Y]$ and $\mathrm{Ad}[X](Y)$ to denote the operations 
$[X,Y]:=XY-YX$ and $\mathrm{Ad}[X](Y):=XYX^*$.

Given a compact set $\mathbb{X}\subset \mathbb{C}$ and a subset $S\subseteq M_n$, let us dote by $S(\mathbb{X})$ the set 
$S(\mathbb{X}):=\{X\in S|\sigma(S)\subseteq \mathbb{X}\}$, in particular we will write $\N{n}(\disk)$ to denote the set 
normal contractions in $M_n$.

\begin{example} Given any integer $n\geq 1$, let us set $\mathbf{N}:=\diag{n,n-1,\ldots,1}$, we will have that the 
set $Z_\mathbf{N}:=\{X\in M_n|[\mathbf{N},X]=0\}$ is a matrix variety. If for some $\delta>0$, we set now 
$Z_{\mathbf{N},\delta}:=\{X\in M_n|\|[\mathbf{N},X]\|\leq \delta\}$, the set $Z_{\mathbf{N},\delta}$ is 
a matrix semialgebraic variety.
\end{example}

\begin{definition}[Curved and Flat matrix paths]
 Given any three hermitian matrices $-\mathbf{1}_n\leq H_1,H_2,H_3 \leq\mathbf{1}_n$ and a function $f\in C^1([-1,1])$, and given any four normal contractions 
 $D_1,\ldots,D_4$ in $M_n$, with $D_2=\Ad{e^{\pi i H_1}}{D_1}$, $D_3=f(H_2)$ and $D_4=f(H_3)$. Let us consider the paths $\breve{Z}_t:=\Ad{e^{\pi i t H_1}}{D_1}$ 
 and $\bar{V}_t:=f(tH_3+(1-t)H_2)$. We will say that $\breve{Z}$ is a {\em \bf curved} interpolating path for $D_1,D_2$ and we will say that the path 
 $\bar{V}$ is a {\em \bf flat} interpolating path for $D_3,D_4$.
\end{definition}

\begin{definition}[$\circledast$ operation]
 Given two matrix paths $X,Y\in C([0,1],M_n)$ we write ${X\circledast Y}$ to denote the concatenation of $X$ and $Y$, which is 
 the matrix path defined in terms of $X$ and $Y$ by the expression,
 \[
  {X\circledast Y}_s:=
  \left\{
  \begin{array}{l}
   X_{2s},\:\: 0\leq s\leq \frac{1}{2},\\
   Y_{2s-1},\:\: \frac{1}{2}\leq s\leq 1.
  \end{array}
  \right.  
 \]
\end{definition}

Let us denote by $\kappa$ the matrix compression $M_{2n}\to M_n$ defined by the mapping
 \begin{eqnarray}
  \kappa:M_{2n}\to M_n,
        \left(
        \begin{array}{cc}
         x_{11} & x_{12} \\
         x_{21} & x_{22}
        \end{array}
        \right)
        \mapsto x_{11}.    
 \label{the_compression}
 \end{eqnarray}
Let us write $\imath_2:M_n\to M_{2n}$ to denote the $C^*$-homomorphism defined by the expression $\imath_2(x):=x\oplus x=\I_2\otimes x$.

For any C$^*$-homomorphism $\Psi:M_n\to M_n$, we will write $\Psi^\dagger$ and $\Psi_{[m]}$ to denote the inverse of 
$\Psi$ and the natural extension 
$\Psi_{[m]}:M_n^m\to M_n^m,(X_1,\ldots,X_m)\mapsto(\Psi(X_1),\ldots,\Psi(X_m))$, respectively. 

\begin{definition}[Standard dilations]
\label{Std_dilations}
 Given a $C^*$-automorphism $\Psi:=\mathrm{Ad}[W]$ (with $W\in \U{n}$) in $M_n$, we will denote by 
$\Psi^{[s]}$ the $C^*$-automorphism in $M_{2n}$ defined by the expression 
$\Psi^{[s]}:=\mathrm{Ad}[\I_2\otimes W]=\mathrm{Ad}[W\oplus W]$. We call $\Psi^{[s]}$ a standard dilation 
of $\Psi$.
\end{definition}

\begin{definition}[$\ZZ/2$-dilations]
\label{Z2_dilations}
Given a $C^*$-automorphism $\Psi:=\mathrm{Ad}[W]$ (with $W\in \U{n}$) in $M_n$, we will denote by 
$\Psi^{[2]}$ the $C^*$-automorphism in $M_{2n}$ defined by the expression 
$\Psi^{[2]}:=\mathrm{Ad}[(\Sigma_2\otimes \I_n)(W^*\oplus W)]$. We call $\Psi^{[2]}$ a $\ZZ/2$-dilation 
of $\Psi$.
\end{definition}

\begin{remark}
\label{local_matching_remark}
 It can be seen that $\kappa(\imath_2(x))=x$ for any $x\in M_{2n}$, it can also be seen that 
 $\kappa((\Psi^\dagger)^{[2]}(\imath_2(x)))=\kappa(\Psi^{[s]}(\imath_2(x)))$.
\end{remark}

Using the same notation as Pryde in \cite{Pryde_Inequalities}, let $\RR_{(N)}$ denote the Clifford algebra over $\RR$ 
with generators $\NS{e}{N}$ and relations $e_ie_j=-e_je_i$ for $i\neq j$ and $e^2_i=-1$. Then $\RR_{(N)}$ is an 
associative algebra of dimension $2^N$. Let $S(N)$ denote the set $\mathscr{P}(\{1,\ldots,N\})$. Then the elements 
$e_S=e_{s_1}\cdots e_{s_k}$ form a basis when $S=\{\NS{s}{k}\}$ and $1\leq s_1< \cdots<s_k\leq N$. Elements of 
$\RR_{(N)}$ are denoted by $\lambda=\sum_{S}\lambda_Se_S$ where $\lambda_S\in \RR$. Under the inner product 
$\scalprod*{}{\lambda}{\mu}=\sum_S \lambda_S \mu_S$, $\RR_{(N)}$ becomes a Hilbert space with orthonormal basis $\{e_S\}$.

\begin{definition}
The {\em Clifford operator} of $N$ elements $\NS{X}{N}\in M_n$ is the operator 
defined in  $M_n\otimes \RR_{(N)}$   by
\[
 \Cliff{\NS{X}{N}}:=i\sum_{j=1}^N X_j\otimes e_j.
\]
\end{definition}
Each element $T=\sum_S T_S\otimes e_S\in M_n\otimes \RR_{(N)}$ acts on elements $x=\sum_S x_S\otimes e_S\in \CC^n\otimes \RR_{(N)}$ 
by $T(x):=\sum_{S,S'}T_s(x_{S'})\otimes e_Se_{S'}$. So $\Cliff{\NS{X}{N}}\in M_n\otimes \RR_{(N)}\subseteq \Li{\CC^n\otimes \RR_{(N)}}$. By 
$\|\Cliff{\NS{X}{N}}\|$ we will mean the operator norm of $\Cliff{\NS{X}{N}}$ as an element of $\Li{\CC^n\otimes \RR_{(N)}}$. As observed by 
Elsner in \cite[5.2]{Elsner_Perturbation_theorems} we have that
\begin{equation}
 \|\Cliff{\NS{X}{N}}\|\leq \sum_{j=1}^N\|X_j\|.
 \label{Clifford_bound}
\end{equation}

Let us denote by $\mathbf{\Delta}$ the function 
$\mathbf{\Delta}:M_n^m\times M_n^m\to \RR^+_0,(\mathbf{S},\mathbf{T})\mapsto \|\Cliff{\mathbf{S}-\mathbf{T}}\|$. As a consequence of the estimates obtained in 
\ref{Clifford_bound} we 
will have that $\dist{\mathbf{S}}{\mathbf{T}}\leq m\max_{1\leq j\leq m}\|S_j-T_j\|$.

\begin{definition}[Locally controlled functional calculus]\label{def_local_controllability}
 Given two integers $k,m\geq 1$ and a linear mapping $\Phi:M_{n}^m\to M_{kn}^m$, we say that a function 
 $f:M_{n}^m\to M_n^m$ is locally controlled by $\Phi$ if for any 
 $\mathbf{X},\mathbf{Y}\in \N{n}(\disk)^m$ we have that, 
 \begin{equation}
 \dist{\Phi(f(\mathbf{X}))}{\Phi(f(\mathbf{Y}))}\leq C_f\dist{f(\Phi(\mathbf{X}))}{f(\Phi(\mathbf{Y}))}
 \label{controllability_condition_1}
 \end{equation}
 for some constant $C_f$ which does not depend on $n$. Here $\mathbf{\Delta}$ is some suitable metric induced in $M_n^m$ by the operator norm.
\end{definition}

The following results were proved in \cite{Vides_homotopies}.

\begin{lemma}[Existence of isospectral approximants]
\label{Joint_spectral_variation_inequality_2}
 Given $\varepsilon>0$ there is $\delta> 0$ such that, for any $2$ families of $N$ pairwise commuting normal 
 matrices $\NS{x}{N}$ and $\NS{y}{N}$ 
 which satisfy the constraints $\|x_j-y_j\|\leq \delta$ for each $1\leq j\leq N$, there is a $C^*$-homomorphism  
 $\Psi$ such that $\sigma(\Psi(x_j))=\sigma(x_j)$, $[\Psi(x_j),y_j]=0$ and 
 $\max\{\|\Psi(x_j)-y_j\|,\|\Psi(x_j)-x_j\|\}\leq \varepsilon$, for each $1\leq j\leq N$.
\end{lemma}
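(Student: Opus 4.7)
The plan is to realize $\Psi$ as an inner automorphism $\mathrm{Ad}[W]$ of $M_n$, where $W\in\U{n}$ is chosen to transport the joint spectral decomposition of $\mathbf{x}:=(x_1,\ldots,x_N)$ onto that of $\mathbf{y}:=(y_1,\ldots,y_N)$. Since both tuples are commuting and normal, each admits a joint spectral decomposition $x_j=\sum_{k=1}^{s}\lambda_j^{(k)}P_k$ and $y_j=\sum_{l=1}^{t}\mu_j^{(l)}Q_l$ with pairwise orthogonal projections summing to $\I$ and tuples $\lambda^{(k)},\mu^{(l)}\in\CC^N$.

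The first step is to quantify how close the two joint spectra are. The Elsner bound \eqref{Clifford_bound} gives $\|\Cliff{x_1-y_1,\ldots,x_N-y_N}\|\leq N\delta$, which combined with a Pryde-type joint spectral variation inequality places $\sigma(\mathbf{x})$ and $\sigma(\mathbf{y})$ within Hausdorff distance $C_N\delta$ inside $\CC^N$. The next step is a two-scale clustering: fix a scale $r=\sqrt{\delta}$ and group the joint eigenvalues of $\mathbf{x}$ into clusters $\{K\}$ whose intra-cluster pairwise distance is at most $r$ and whose inter-cluster distance exceeds $r$. For $\delta$ small enough that $r>2C_N\delta$, the clusters of $\mathbf{x}$ correspond bijectively via some $\tau$ to clusters of $\mathbf{y}$; defining $\tilde P_K:=\chi_K(\mathbf{x})=\sum_{k\in K}P_k$ and $\tilde Q_K:=\chi_K(\mathbf{y})=\sum_{l\in\tau(K)}Q_l$ through a smooth cluster cut-off $\chi_K$ of Lipschitz constant $O(1/r)$, multivariate functional-calculus stability yields $\|\tilde P_K-\tilde Q_K\|=O(\delta/r)=O(\sqrt\delta)$, which in turn forces $\mathrm{rank}(\tilde P_K)=\mathrm{rank}(\tilde Q_K)$.

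The third step is a cluster-wise construction $W=\bigoplus_K W_K$ with $W_K:\tilde P_K\CC^n\to\tilde Q_K\CC^n$, each $W_K$ selected so that every $W_K P_k W_K^*$ (for $k\in K$) is block-diagonal with respect to $\{Q_l\}_{l\in\tau(K)}$; this reduces to a finite combinatorial transport problem, namely distributing the rank of each $P_k$ among the $Q_l$'s with matching row/column totals, which is solvable because the total ranks agree. Setting $\Psi:=\mathrm{Ad}[W]$ then immediately gives $\sigma(\Psi(x_j))=\sigma(x_j)$, while $\Psi(x_j)$ is block-diagonal with respect to the global family $\{Q_l\}$, so $[\Psi(x_j),y_j]=0$. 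For the norm estimates, writing $x_j|_{\tilde P_K}=\bar\lambda_j^{(K)}\tilde P_K+E_{K,j}$ with $\|E_{K,j}\|\leq r$ produces the cluster-wise bounds
\[
\|\Psi(x_j)-x_j\|_{\tilde Q_K\CC^n\cup\tilde P_K\CC^n}\leq 2\|E_{K,j}\|+|\bar\lambda_j^{(K)}|\cdot\|\tilde P_K-\tilde Q_K\|=O(r)+O(\delta/r),
\]
and analogously $\|\Psi(x_j)-y_j\|\leq 2r+C_N\delta$, both $O(\sqrt\delta)$, so choosing $\delta$ small in $\varepsilon$ closes the argument.

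The main obstacle will be the two-scale accounting: the clustering must be coarse enough ($r\gg C_N\delta$) for the spectral-gap argument to control $\|\tilde P_K-\tilde Q_K\|$, and simultaneously fine enough ($r\ll\varepsilon$) that intra-cluster eigenvalue variation contributes less than $\varepsilon$ to the final norm bounds. Uniformity of $\delta(\varepsilon,N)$ in $n$ is precisely what forces this compromise rather than a direct eigenvalue-by-eigenvalue matching, which breaks down whenever the joint spectrum of $\mathbf{x}$ exhibits near-coincidences; it is exactly this cluster-based dilution that makes the isospectral assignment $\sigma(\Psi(x_j))=\sigma(x_j)$ compatible with the closeness estimates on both sides.
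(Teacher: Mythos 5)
Note first that this lemma is not proved in the present paper at all: it is stated under the heading ``The following results were proved in \cite{Vides_homotopies}'' and imported by citation, so there is no in-document proof to compare yours against. Judged on its own terms, your skeleton (realize $\Psi$ as $\mathrm{Ad}[W]$, use the Clifford operator and a Pryde-type estimate to control the joint spectrum, then transport the joint spectral decomposition of $\mathbf{x}$ onto the eigenbasis of $\mathbf{y}$) is the right kind of argument, and your final observation --- that the isospectral requirement forces a clustered rather than eigenvalue-by-eigenvalue transport once joint eigenvalues nearly coincide --- correctly identifies the crux.

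The genuine gap is the clustering step itself. You postulate, at the fixed scale $r=\sqrt{\delta}$, a partition of the joint spectrum of $\mathbf{x}$ into blocks with intra-cluster pairwise distance at most $r$ and inter-cluster distance exceeding $r$. Such a partition need not exist: a ``chain'' of joint eigenvalues with consecutive gaps of size $r/2$ admits no cut of this kind, and since the lemma must hold for every $n$ (with $\delta$ depending only on $\varepsilon$ and $N$), you cannot appeal to a pigeonhole/dyadic gap argument without making $\delta$ depend on the number of eigenvalues, hence on $n$. The natural repair --- cluster by the chaining equivalence at some scale $g$ --- gives inter-cluster gaps $>g$ but intra-cluster diameters as large as $(n-1)g$; your projection estimate then needs $g\gg\delta$ while your norm estimate needs $(n-1)g\ll\varepsilon$, and these are incompatible uniformly in $n$. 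You flag exactly this tension in your last paragraph but assert, rather than establish, that the compromise can be made; as written the existence of the required clusters is an unproved assumption on which everything downstream (rank matching, the transport problem, the $O(\sqrt\delta)$ bounds) rests. A secondary soft spot is the inequality $\|\chi_K(\mathbf{x})-\chi_K(\mathbf{y})\|=O(\delta/r)$, which you attribute to ``multivariate functional-calculus stability'': since $\mathbf{x}$ and $\mathbf{y}$ do not commute with one another, there is no joint functional calculus of the pair, and this bound has to be earned (for instance through a resolvent or Riesz-projection estimate for the associated Clifford operators, which is presumably what the Elsner/Pryde machinery in \cite{Vides_homotopies} is for) rather than invoked as a black box.
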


\begin{theorem}[Local normal toral connectivity]
\label{local_connection_of_N_tuples_of_normal_contractions}
 Given $\varepsilon>0$ and any $n\in\ZZ^+$, there is $\delta>0$ such that, for any $2N$ normal contractions $\NS{x}{N}$ and $y_1,\ldots,$ $y_N$ in $M_n$ which 
satisfy the relations
\[
\left\{
\begin{array}{l}
 [x_j,x_k]=[y_j,y_k]=0, \:\:\: 1\leq j,k\leq N,\\
 \|x_j-y_j\|\leq \delta, \:\:\: 1\leq j\leq N,
\end{array}
\right.
\]
there exist $N$ toroidal matrix 
links $X^1,\ldots,X^N$ in $M_n$, which solve the problems
\[
 x_j \rightsquigarrow y_j, \:\:\: 1\leq j\leq N,
\]
and satisfy the constraints
\[
\left\{
\begin{array}{l}
 [X_t^j(x_j),X_t^k(x_k)]=0, \\
 \|X_t^j(x_j)-y_j\|\leq \varepsilon,
\end{array}
\right.
\]
for each $1\leq j,k\leq N$ and each $t\in \mathbb{I}$. Moreover, 
\[
 \ell_{\|\cdot\|}(X_t^j(x_j))\leq \varepsilon, \:\:\: 1\leq j\leq N.
\]
\end{theorem}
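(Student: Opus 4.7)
The plan is to combine Lemma (Existence of isospectral approximants) with the curved/flat path constructions from the excerpt to build each $X^j$ as the concatenation (via $\circledast$) of a toroidal conjugation segment followed by a flat segment, reducing the main task to a uniform commutator-of-logarithm estimate.

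First I would apply the isospectral approximants lemma with tolerance $\varepsilon/2$ to extract the corresponding $\delta'>0$ so that, for any input $\NS{x}{N},\NS{y}{N}$ satisfying $\|x_j-y_j\|\leq \delta'$, I obtain a $C^*$-homomorphism $\Psi$ with $\sigma(\Psi(x_j))=\sigma(x_j)$, $[\Psi(x_j),y_j]=0$, and $\max\{\|\Psi(x_j)-x_j\|,\|\Psi(x_j)-y_j\|\}\leq \varepsilon/2$. Since $\Psi$ is a spectrum-preserving $*$-homomorphism of the simple algebra $M_n$ it is inner, say $\Psi=\mathrm{Ad}[U]$ for some $U\in\U{n}$; after adjusting by a central scalar phase I arrange $\|U-\I_n\|$ to be small and write $U=e^{i\pi H}$ via the principal branch of the logarithm.

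Next I would define, for each $j$, the two-stage path
\[
 X^j_t :=
 \begin{cases}
  e^{i\pi(2t) H}\, x_j\, e^{-i\pi(2t) H}, & 0\leq t\leq \tfrac12,\\
  (2-2t)\,\Psi(x_j) + (2t-1)\, y_j, & \tfrac12\leq t\leq 1,
 \end{cases}
\]
so that $X^j$ is the concatenation of a curved segment with a flat segment, joined at $t=\tfrac12$ where both branches evaluate to $\Psi(x_j)$. Along the curved half the family $\{X^j_t\}_{j=1}^N$ is a simultaneous unitary conjugate of the commuting family $\{x_j\}_j$ and hence remains a pairwise commuting family of normal contractions; along the flat half the larger family $\{\Psi(x_1),\ldots,\Psi(x_N),y_1,\ldots,y_N\}$ jointly commutes by construction, so it admits a common eigenbasis in which each $X^j_t$ is a convex combination of diagonal contractions, again normal, contractive and pairwise commuting.

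The estimates are immediate along the flat half: $\|X^j_t-y_j\|\leq \|\Psi(x_j)-y_j\|\leq \varepsilon/2$ with arclength bounded by $\|\Psi(x_j)-y_j\|\leq \varepsilon/2$. Along the curved half, differentiating $t\mapsto e^{i\pi tH}x_j e^{-i\pi tH}$ yields the arclength bound $\pi\|[H,x_j]\|$. The main obstacle will be making $\|[H,x_j]\|$ uniformly small in $\delta'$ independent of $n$: I would handle this by expanding $\log U$ in Neumann series, computing $[\log U,x_j]$ term by term, and using the bound $\|[(U-\I_n)^k,x_j]\|\leq k\|U-\I_n\|^{k-1}\|[U,x_j]\|$ to conclude $\pi\|[H,x_j]\|\leq \|[U,x_j]\|/(1-\|U-\I_n\|)\leq 2\|\Psi(x_j)-x_j\|$ once $\|U-\I_n\|\leq \tfrac12$. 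Choosing $\delta$ small enough in the original application so that the lemma's output guarantees $\|U-\I_n\|\leq \tfrac12$ and $\|\Psi(x_j)-x_j\|\leq \varepsilon\pi/4$ then forces both proximity and arclength on each segment to stay below $\varepsilon$, completing the proof.
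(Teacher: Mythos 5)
Your two-stage architecture (a curved conjugation segment from $x_j$ to $\Psi(x_j)$, then a flat convex segment to $y_j$, joined via $\circledast$) is the right shape and matches the flavor of L.\ref{GUJC_connectivity} in this paper, but there is a genuine gap at the crucial middle step. The claim that ``after adjusting by a central scalar phase I arrange $\|U-\I_n\|$ to be small'' is false. What the isospectral-approximant lemma controls is $\|\Psi(x_j)-x_j\|=\|Ux_jU^*-x_j\|=\|[U,x_j]\|$ for each $j$, and smallness of the commutators $[U,x_j]$ does not force $U$ near $\CC\I_n$. Take $N=1$ and $x_1=y_1=\I_n$: then $\Psi=\mathrm{Ad}[U]$ satisfies every conclusion of the lemma for an arbitrary $U\in\U{n}$, and, for example, $U=\mathrm{diag}(1,-1,\ldots)$ is at distance at least $\sqrt{2}$ from every scalar unitary. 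Without $\|U-\I_n\|<1$ the principal logarithm $H$ and the Neumann-series commutator estimate you invoke are unavailable, so the curved segment $t\mapsto e^{i\pi(2t)H}x_je^{-i\pi(2t)H}$ cannot even be written down.

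The missing idea is to not conjugate by $U$ itself but to first manufacture a unitary $\hat W$ in the relative commutant of the image algebra with $\|U-\hat W\|<1$, and then conjugate by $Z:=\hat W^*U$. Since $\hat W$ commutes with each $\Psi(x_j)$ one still has $Zx_jZ^*=\Psi(x_j)$, while now $\|\I_n-Z\|<1$ so a small hermitian logarithm $H_Z$ exists and your arclength estimate $\pi\|[H_Z,x_j]\|\lesssim\|[U,x_j]\|$ carries through. This is exactly the mechanism in L.\ref{GUJC_connectivity}, and producing $\hat W$ is a real step: one passes to a nearby generator $\hat X$ of the commuting family with a small number $r$ of distinct spectral values and applies L.\ref{existence_of_almost_unit}, whose constant depends on $r$ and on the spectral gaps of $\hat X$. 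That $n$-dependence is why the theorem only promises a $\delta$ that may depend on $n$; your proposal aims for a uniform-in-$n$ bound, which the statement does not claim and which this construction does not give.
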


\begin{theorem}[Lifted local toral connectivity]
 \label{lifted_connectivity}
\label{lifted_local_connection_of_N_tuples_of_normal_contractions}
 Given $\varepsilon>0$, there is $\delta>0$ such that, for any $2N$ normal contractions $\NS{x}{N}$ and $\NS{y}{N}$ in $M_n$ which 
satisfy the relations
\[
\left\{
\begin{array}{l}
 [x_j,x_k]=[y_j,y_k]=0, \:\:\: 1\leq j,k\leq N,\\
 \|x_j-y_j\|\leq \delta, \:\:\: 1\leq j\leq N,
\end{array}
\right.
\]
there is a $C^*$-homomorphism $\Phi:M_n\to M_{2n}$ and $N$ toroidal matrix 
links $X^1,\ldots,X^N$ in $C(\mathbb{I},M_{2n})$, which solve the problems
\[
 \Phi(x_j) \rightsquigarrow y_j\oplus y_j, \:\:\: 1\leq j\leq N,
\]
and satisfy the constraints
\[
\left\{
\begin{array}{l}
 [X_t^j,X_t^k]=0, \\
 \kappa(\Phi(x_j))=x_j,\\
 \|\Phi(x_j)-x_j\oplus x_j\|\leq \varepsilon,\\
 \|X_t^j-y_j\oplus y_j\|\leq \varepsilon,
\end{array}
\right.
\]
for each $1\leq j,k\leq N$ and each $t\in \mathbb{I}$. Moreover, 
\[
 \ell_{\|\cdot\|}(X_t^j)\leq \varepsilon, \:\:\: 1\leq j\leq N.
\]
\end{theorem}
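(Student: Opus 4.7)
The plan is to combine Theorem \ref{local_connection_of_N_tuples_of_normal_contractions} with the dilation technology of Definitions \ref{Std_dilations} and \ref{Z2_dilations}, bootstrapping off the isospectral approximant result of Lemma \ref{Joint_spectral_variation_inequality_2}. First, I would apply Lemma \ref{Joint_spectral_variation_inequality_2} to the commuting tuples $\NS{x}{N}$ and $\NS{y}{N}$; for $\delta$ sufficiently small relative to $\varepsilon$, this yields a $C^*$-automorphism $\Psi = \mathrm{Ad}[W]$ of $M_n$ with $W$ close to $\I_n$, such that $\sigma(\Psi(x_j)) = \sigma(x_j)$, $[\Psi(x_j), y_j] = 0$, and $\max\{\|\Psi(x_j)-x_j\|,\|\Psi(x_j)-y_j\|\} \leq \varepsilon/3$ for each $j$. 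Because $W$ is close to $\I_n$, I can choose a Hermitian logarithm $W = e^{\pi i H}$ with $\|H\|$ of order $\delta$; this $H$ will play the role of the generator driving the curved segment of each matrix link.

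Next, I would build the lifted embedding $\Phi \colon M_n \to M_{2n}$ as a modification of $\imath_2$ tailored so that the compression condition $\kappa \circ \Phi = \mathrm{id}_{M_n}$ holds exactly. Using the $\ZZ/2$-dilation from Definition \ref{Z2_dilations} together with Remark \ref{local_matching_remark}, the map $\Phi(x) := (\Psi^\dagger)^{[2]}(\imath_2(x))$ (or the standard-dilation variant $\Psi^{[s]}(\imath_2(x))$, which agrees with it under $\kappa$) supplies the required embedding. The estimate $\|\Phi(x_j) - x_j \oplus x_j\| \leq \|\Psi(x_j) - x_j\| \leq \varepsilon$ follows directly from Step 1.

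Then I would construct the toroidal matrix links by concatenation, $X^j := \breve{X}^j \circledast \bar{X}^j$. The curved leg $\breve{X}^j_t := \Ad{\I_2 \otimes e^{\pi i (1-t) H}}{\Phi(x_j)}$ transports $\Phi(x_j)$ to $\imath_2(\Psi(x_j))$ through a path that automatically preserves every commutation relation, since simultaneous conjugation of the entire tuple by a single unitary cannot break commutators. The flat leg $\bar{X}^j$ then interpolates between $\imath_2(\Psi(x_j))$ and $\imath_2(y_j) = y_j \oplus y_j$; it is produced by invoking Theorem \ref{local_connection_of_N_tuples_of_normal_contractions} inside $M_{2n}$ on the two pairwise commuting tuples $\{\imath_2(\Psi(x_j))\}_j$ and $\{\imath_2(y_j)\}_j$, whose entries are at pairwise distance at most $\varepsilon/3$.

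The main obstacle is a joint calibration of constants: $\delta$ must be chosen small enough that $\|H\| = O(\delta)$ forces $\ell_{\|\cdot\|}(\breve{X}^j) \leq \varepsilon/2$, while simultaneously satisfying the tolerance $\delta'$ demanded by Theorem \ref{local_connection_of_N_tuples_of_normal_contractions} for the flat leg, and the entire concatenated path must stay inside the $\varepsilon$-ball around $y_j \oplus y_j$ — in particular across the join where the two legs meet. A secondary subtlety is the compatibility of the compression identity $\kappa \circ \Phi = \mathrm{id}_{M_n}$ with the $\ZZ/2$-twist inside $(\Psi^\dagger)^{[2]}$: Remark \ref{local_matching_remark} supplies the matching needed at the endpoints, but one must verify that the curved homotopy in $M_{2n}$ still descends correctly under $\kappa$ along the whole path so that the lift is consistent with the original data $\NS{x}{N}$.
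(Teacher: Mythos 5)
The paper does not prove this theorem; it is stated as one of the results imported from \cite{Vides_homotopies}. So there is no in-paper proof to match, but the proposal has several genuine gaps that would prevent it from working as written.

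The central flaw is the claim that, after applying Lemma~\ref{Joint_spectral_variation_inequality_2}, one can write $W=e^{\pi i H}$ with $\|H\|=O(\delta)$. Lemma~\ref{Joint_spectral_variation_inequality_2} controls only $\|\Psi(x_j)-x_j\|$ and $\|\Psi(x_j)-y_j\|$; it gives \emph{no} control on the distance from $W$ to $\I_n$. A unitary $W$ can move every $x_j$ by a small amount while being far from the identity (indeed far from the commutant of the $x_j$'s), and this is exactly the obstruction --- essentially of Bott-index type --- that forces the passage to $M_{2n}$ in the first place. If one actually had $\|W-\I_n\|=O(\delta)$, the curved path $t\mapsto \Ad{e^{\pi i t H}}{x_j}$ would already solve the problem inside $M_n$ with a uniform $\delta$, making the dilation superfluous; the absence of such control is precisely why the lifted statement is needed and is nontrivial. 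The lemma that \emph{would} give control on a nearby unitary logarithm is Lemma~\ref{existence_of_almost_unit}, but it requires the special algebraic/spectral-gap hypotheses of the GUJC framework, which do not hold for arbitrary commuting tuples.

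Two further problems. First, the proposed $\Phi:=(\Psi^\dagger)^{[2]}\circ\imath_2$ does not satisfy the required compression identity: unwinding Definition~\ref{Z2_dilations} gives $\kappa\bigl((\Psi^\dagger)^{[2]}(\imath_2(x))\bigr)=\Psi^{\pm1}(x)$ (a conjugate of $x$ by $W^{\pm1}$), not $x$ itself, so $\kappa(\Phi(x_j))=x_j$ fails; Remark~\ref{local_matching_remark} asserts agreement with $\kappa(\Psi^{[s]}(\imath_2(x)))=\Psi(x)$, which is again not $x$. The map one would want is something like $\Phi=\mathrm{Ad}[\I_n\oplus W]\circ\imath_2$, i.e.\ $\Phi(x)=x\oplus\Psi(x)$, which does satisfy $\kappa\circ\Phi=\mathrm{id}$ and $\|\Phi(x_j)-x_j\oplus x_j\|=\|\Psi(x_j)-x_j\|$. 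Second, invoking Theorem~\ref{local_connection_of_N_tuples_of_normal_contractions} inside $M_{2n}$ for the flat leg imports that theorem's $n$-dependent $\delta$ (it is stated ``Given $\varepsilon>0$ and any $n\in\ZZ^+$, there is $\delta>0$''), whereas the whole purpose of the lifted theorem is to produce a $\delta$ uniform in $n$. The flat leg must instead be constructed directly: since $[\Psi(x_j),y_j]=0$, the linear interpolation $(1-t)\Psi(x_j)+ty_j$ already stays normal and pairwise commuting, and no appeal to the $n$-dependent theorem is needed there. The genuinely hard part, which the proposal does not address, is producing a short unitary path in $M_{2n}$ (using the dilation to kill the index obstruction) that carries $\Phi(x_j)$ into the commutant of $\{y_j\oplus y_j\}$.
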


\subsection{Jointly compressible matrix sets}\label{compressible_matrix_sets} Given $0<\delta\leq \varepsilon$, we can now consider an alternative approach to 
the local connectivity problem involving two $N$-sets of pairwise commuting normal matrix contractions $\NS{X}{N}$ and $\NS{Y}{N}$ such 
that $\|X_j-Y_j\|\leq\delta$ for 
each $1\leq j\leq N$. The approach that we will consider in this section consists of considering the existence 
of a normal contraction $\hat{X}$ such that $\NS{X}{N}\in C^*(\hat{X})$, and which also satisfies the constraint $\|\hat{X}-X_j\|\leq \varepsilon$ for some 
$1\leq j\leq N$. A matrix $\hat{X}$ which satisfies the previous conditions will be called a {\em \bf nearby generator} for 
$\NS{X}{N}$, it can be seen that for any $\delta\leq \nu\leq \varepsilon$ one can find a flat analytic path $\bar{X}\in C([0,1],\Minf)$ 
that performs the deformation $X_j\rightsquigarrow_{\nu}\hat{X}$, where $\hat{X}$ is a nearby generator for 
$\NS{X}{N}$.

Given any joint isospectral approximant $\Psi$ with respect to the families normal contractions described in the previous paragraph, along the lines 
of the program that we have used 
to derive the connectivity results 
T.\ref{local_connection_of_N_tuples_of_normal_contractions} and T.\ref{lifted_local_connection_of_N_tuples_of_normal_contractions}, we 
can use L.\ref{Joint_spectral_variation_inequality_2} to find a C$^*$-automorphism which solves the extension problem described by 
the diagram,
\begin{equation}
\xymatrix{
& C^*(\hat{X}) \ar@{-->}[d]^{\hat{\Psi}}\\
C^*(\NS{X}{N}) \ar@{^{(}->}[ru] \ar[r]_{\Psi} & C^*(\NS{Y}{N})'
}
 \label{NJC_lifting}
\end{equation}
and satisfies the relations $\Psi(X_j)=\hat{\Psi}(X_j)$ for each $1\leq j\leq N$ together with the normed constraints
\[
\max\{\|\hat{\Psi}(\hat{X})-\hat{X}\|,\max_j\{\|\hat{\Psi}(X_j)-X_j\|,\|\hat{\Psi}(X_j)-Y_j\|\}\}\leq \varepsilon. 
\]
We refer to the C$^*$-automorphism $\hat{\Psi}$ in \ref{NJC_lifting} as a {\bf compression} of $\Psi$ or a {\bf compressive joint isospectral approximant} 
({\bf CJIA}) for the $N$-sets of normal contractions.

\section{Matrix Words and Structured Matrix Sets}
\label{matrix_words}

Given a finite set $\mathbf{C}:=\{\NS{c}{M}\}\subset M_n$ of normal contractions which contains the identity matrix $\I_n\in M_n$ 
and some fixed but arbitrary integer $L>0$, by a mixed matrix word of 
length $L$ we mean a function $W_L:M_n^{M}\times M_n^{2N}\to M_n,(\NS{c}{M},\NS{x}{{2N}})\mapsto c_{j_1}x_{j_1}^{k_1}\cdots c_{j_L}x_{j_L}^{k_L}$ on $2N$ 
matrix variables $\mathbf{X}:=\{x_{1},$
$\ldots,x_{2N}\}$, where $c_{j_l}\in \mathbf{C}$ and $k_l\in \mathbb{Z}^+_0$, $1\leq l\leq L$. The number 
$\deg(W_L):=\max_{1\leq l\leq L}\{k_l\}$ will be 
called the degree of the word $W_L$. We call the sets $\mathbf{C}$ and $\mathbf{X}$ matrix coefficient and matrix variable sets respectively. 

Let us consider now a function $F:M_n^{m}\to M_n^{m},\mathbf{X}\mapsto (f_1(\mathbf{X}),\ldots,f_m(\mathbf{X}))$ where for each $1\leq j\leq m$ we have that 
\begin{eqnarray}
f_k(\mathbf{X}):=\sum_{j=1}^{J_k} \alpha_{k,j} W_{1,L_j}(\mathbf{C},\NS{X}{m},\NS{X^*}{m}),
\label{matrix_function_on_words}
\end{eqnarray}
where $\{\alpha_{k,j}\}_{1\leq k\leq m,\\ 1\leq j\leq J_k}\subseteq \CC$ and $\mathbf{C}$ is a set of matrix coefficients of the corresponding matrix words. Building on 
the ideas presented in \cite[Chapt. 5]{Vides_dissertation}, let us consider the following concept.
%

\begin{remark}\label{controllability_remark}
Along the lines of the analysis presented in \cite[Chapt. 5]{Vides_dissertation}, it can be noticed that for any 
C$^*$-homomorphism $\Psi:M_n\to M_n$ the matrix functions of the form \ref{matrix_function_on_words} are locally 
controlled by $\Psi^{[s]}$ and $\Psi^{[2]}$.
\end{remark}

\subsection{Covering Matrix Varieties and Structured Matrix Algebras}

\label{covering_varieties}

\begin{definition}[Covering Matrix Semialgebraic Varieties]
 Given two integers $k,m\geq 1$, a matrix semialgebraic variety $\mathcal{Z}\subseteq M_n^m$ and a linear surjection 
 $\mathcal{K}: M_{kn}^m\to M_n^m$, we say that a matrix semialgebraic variety $\hat{\mathcal{Z}}\subseteq M_{kn}^m$ is a 
 {\em \bf covering (semialgebraic) variety} of $\mathcal{Z}$ if given $\varepsilon>0$, there is $\delta>0$ such that 
 for any $m$-tuple $\mathbf{X}\in \hat{\mathcal{Z}}$, $\mathcal{K}(\mathbf{X})\in \mathcal{Z}$ and if 
 $\dist{\mathcal{K}(\mathbf{X})}{\mathcal{K}(\mathbf{Y})}\leq \delta$ for some $\mathbf{X},\mathbf{Y}\in \hat{\mathcal{Z}}$, then 
 $\dist{\mathbf{X}}{\mathbf{Y}}\leq \varepsilon$.
\end{definition}

\begin{example}
 Let us consider the matrix $\mathbf{N}:=\diag{n,n-1,\ldots,1}$ in $M_n$, together with the corresponding matrix variety 
 \[
Z_{\mathbf{N}}:=\{(X_1,\ldots,X_m)\in \N{n}(\disk)^m|[\mathbf{N},X_j]=0, 1\leq j\leq m\}.
\]
For any C$^*$-homomorphism $\Psi:C^*(\mathbf{N})\to C^*(\mathbf{N})$. The matrix varieties 
$\hat{Z}_{\mathbf{N}}:=\Psi^{[s]}_{[m]}(Z_{\mathbf{N}})$ and 
$\tilde{Z}_{\mathbf{N}}:=\Psi^{[2]}_{[m]}(Z_{\mathbf{N}})$ are covering varieties of $Z_{\mathbf{N}}$ with respect to the natural extension to $M_n^m$ of the linear 
compression $\kappa$ defined in \ref{the_compression}.
\end{example}

\begin{definition}[Structured Matrix C$^*$-algebra]
 Given a matrix semialgebraic variety $\mathcal{Z}\subseteq \N{n}(\disk)^m$, we say 
 that the C$^*$-algebra $A_{\mathcal{Z}}:=C^*(X_1,\ldots,X_m)$ is a $\mathcal{Z}$-structured matrix (or just matrix structured when it is clear from the context) 
 C$^*$-algebra, if $\mathbf{X}:=(X_1,\ldots,X_m)\in \mathcal{Z}$.
\end{definition}

\begin{definition}[Structured C$^*$-homomorphisms]
Given a matrix semialgebraic variety $\mathcal{Z}\in M_n$ and a $\mathcal{Z}$-structured matrix C$^*$-algebra $A_{\mathcal{Z}}:=C^*(X_1,\ldots,X_m)\subseteq M_{n}$, we say that a C$^*$-homomorphism 
$\Psi:M_n \to M_n$ is $\mathcal{Z}$-structured if $\Psi(A_\mathcal{Z})\subseteq A_\mathcal{Z}$.
\end{definition}

\subsection{Local connectivity of covering matrix varieties}

\label{main_results}

Let us start by generalizing some concepts in \cite{Vides_homotopies}.

\begin{definition}[Generalized uniformly compressible JIA] Given $0<\delta\leq \varepsilon$ and two $N$-sets of pairwise commuting 
normal contractions $\NS{X}{N}$ and $\NS{Y}{N}$ in $\Minf$ such that $\|X_j-Y_j\|\leq \delta$, $1\leq j\leq N$, a joint isospectral approximant 
$\Psi$ of the $N$-sets is said to be {\bf uniformly compressible} if there are a nearby generator $\hat{X}$ for $\NS{X}{N}$, an extension $\hat{\Psi}:=\mathrm{Ad}[W]$ (with $W\in \U{\Minf}$) of $\Psi$ and a unitary 
$\hat{W}\in \hat{\Psi}(C^*(\hat{X}))'$ such that $\|W-\hat{W}\|\leq \varepsilon$. We refer to the $2N$ normal contractions 
$\NS{X}{N}$ and $\NS{Y}{N}$ for which there exists a uniformly compressible JIA ({\bf GUCJIA}) as uniformly jointly compressible ({\bf GUJC}).
\end{definition}

\begin{lemma}[Local connectivity of {\bf GUJC} matrix sets]
\label{GUJC_connectivity}
 Given $\varepsilon>0$, there is $\delta>0$, such that for any two $N$-sets of {\bf GUJC} pairwise commuting normal contractions 
 $\NS{X}{N}$ and $\NS{Y}{N}$ in $\Minf$ such that $\|X_j-Y_j\|\leq \delta$ for each $1\leq j\leq N$, we will have that there are 
 $N$ commutativity preserving piecewise analytic paths $\mathbf{X}^1,\ldots,\mathbf{X}^N\in C([0,1],\Minf)$ that solve the interpolation 
 problem $X_j \rightsquigarrow_{\varepsilon} Y_j$, for each $1\leq j\leq N$.
\end{lemma}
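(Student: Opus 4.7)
The plan is to exploit the GUJC hypothesis to trade the implementing unitary $W$ of the joint isospectral approximant for a near-identity unitary $\tilde{W}$ that still conjugates each $X_j$ to $\Psi(X_j)$, and then to concatenate the resulting small-rotation path with the obvious flat interpolation from $\Psi(X_j)$ to $Y_j$. First I would apply Lemma~\ref{Joint_spectral_variation_inequality_2} with a tolerance $\varepsilon_0$ (to be taken much smaller than $\varepsilon$) to extract a $C^*$-homomorphism $\Psi:C^*(X_1,\ldots,X_N)\to C^*(Y_1,\ldots,Y_N)'$ satisfying $\sigma(\Psi(X_j))=\sigma(X_j)$, $\max\{\|\Psi(X_j)-X_j\|,\|\Psi(X_j)-Y_j\|\}\leq \varepsilon_0$, and (from the codomain) $[\Psi(X_j),Y_k]=0$ for all $j,k$; I would take the GUCJIA supplied by the hypothesis to be compatible with this $\Psi$.

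The central step is then the following. From the GUCJIA I would obtain a nearby generator $\hat{X}$ with $X_1,\ldots,X_N\in C^*(\hat{X})$, an extension $\hat{\Psi}=\mathrm{Ad}[W]$ of $\Psi$, and a unitary $\hat{W}\in\hat{\Psi}(C^*(\hat{X}))'$ with $\|W-\hat{W}\|\leq \varepsilon_0$. Setting $\tilde{W}:=\hat{W}^*W$ one has $\|\tilde{W}-\I\|=\|W-\hat{W}\|\leq \varepsilon_0$, and for every $x\in C^*(\hat{X})$,
\[
\tilde{W}x\tilde{W}^*\;=\;\hat{W}^*(WxW^*)\hat{W}\;=\;\hat{W}^*\hat{\Psi}(x)\hat{W}\;=\;\hat{\Psi}(x),
\]
because $\hat{W}$ commutes with the element $\hat{\Psi}(x)\in\hat{\Psi}(C^*(\hat{X}))$. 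In particular $\mathrm{Ad}[\tilde{W}](X_j)=\Psi(X_j)$ for each $j$, so $\Psi$ is implemented by a unitary within $\varepsilon_0$ of the identity.

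Writing $\tilde{W}=e^{iH}$ via the principal branch of the logarithm (valid once $\varepsilon_0$ is small, giving $\|H\|\leq C\varepsilon_0$), I would define the concatenation
\[
\mathbf{X}^j_t\;:=\;
\begin{cases}
e^{2itH}\,X_j\,e^{-2itH}, & t\in[0,1/2],\\[2pt]
(2-2t)\,\Psi(X_j)+(2t-1)\,Y_j, & t\in[1/2,1],
\end{cases}
\]
which is piecewise analytic, matches at $t=1/2$ via $\mathrm{Ad}[\tilde{W}](X_j)=\Psi(X_j)$, and has endpoints $X_j$ and $Y_j$. Commutativity would follow from two observations: conjugation by the common unitary $e^{2itH}$ is a $*$-automorphism, so the first piece preserves $[X_j,X_k]=0$; and on the second piece the four matrices $\Psi(X_j),\Psi(X_k),Y_j,Y_k$ all mutually commute because $\Psi$ maps into the commutant of $C^*(Y_1,\ldots,Y_N)$ and $[Y_j,Y_k]=0$ by hypothesis, so any convex combinations commute. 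Closeness to $Y_j$ is verified piecewise: for $t\in[0,1/2]$, $\|\mathbf{X}^j_t-Y_j\|\leq 2\|e^{2itH}-\I\|+\|X_j-Y_j\|\leq 2C\varepsilon_0+\delta$; for $t\in[1/2,1]$, $\|\mathbf{X}^j_t-Y_j\|=(2-2t)\|\Psi(X_j)-Y_j\|\leq\varepsilon_0$. Choosing $\varepsilon_0$ (and hence $\delta$) small enough as a function of $\varepsilon$ delivers the required deformations $X_j\rightsquigarrow_\varepsilon Y_j$.

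I expect the main obstacle to be conceptual rather than computational, namely recognizing that the GUJC hypothesis supplies precisely the gauge freedom needed to replace the potentially large-norm $W$ by $\tilde{W}=\hat{W}^*W$. Without this maneuver, the naive rotation path $\mathrm{Ad}[e^{itH}]$ coming from $W=e^{iH}$ could swing $X_j$ arbitrarily far from $Y_j$, breaking the $\varepsilon$-locality; the commuting-commutant condition on $\hat{W}$ is the hypothesis that collapses the rotation to a near-identity one. Once the $\tilde{W}$ construction is in place, the remainder is a routine curved-plus-flat concatenation in the spirit of Theorems~\ref{local_connection_of_N_tuples_of_normal_contractions} and~\ref{lifted_connectivity}.
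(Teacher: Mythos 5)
Your proposal is correct and takes essentially the same route as the paper: set $\tilde{W}=\hat{W}^*W$ (the paper's $Z$), observe that $\mathrm{Ad}[\tilde{W}]$ agrees with $\hat{\Psi}$ on $C^*(\hat{X})$ because $\hat{W}$ lies in $\hat{\Psi}(C^*(\hat{X}))'$, pull a small-norm Hermitian logarithm, and concatenate the curved path $\mathrm{Ad}[e^{itH}](X_j)$ with the flat path from $\Psi(X_j)$ to $Y_j$. The one cosmetic difference is that you first invoke Lemma~\ref{Joint_spectral_variation_inequality_2} to produce a $\Psi$ and then ask the GUCJIA to be ``compatible'' with it, whereas the paper simply reads the GUCJIA (which already \emph{is} a joint isospectral approximant with the required compressibility) straight out of the GUJC hypothesis; in exchange, you spell out the identity $\tilde{W}x\tilde{W}^*=\hat{\Psi}(x)$ and the piecewise commutativity check that the paper leaves implicit.
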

\begin{proof}
 Since the $N$-sets of pairwise commuting normal contractions $\NS{X}{N}$ and $\NS{Y}{N}$ are {\bf GUJC}, we have that given 
 $0<\delta\leq \nu\leq \varepsilon/2<1$, there are a normal contraction $\hat{X}\in \Minf$ which commutes with each $X_j$ together with a {\bf GUCJIA} 
 $\hat{\Psi}=\mathrm{Ad}[W]$ for some $W\in \U{\Minf}$ and a unitary $\hat{W}\in \hat{\Psi}(C^*(\hat{X}))'$ such that 
 \begin{equation}
  \|\I-\hat{W}^*W\|=\|W-\hat{W}\|\leq \nu<1.
  \label{compresion_inequality_1}
 \end{equation}
 Let us set $Z:=\hat{W}^*W$, as a consequence of the inequality \ref{compresion_inequality_1} we will have that there is a hermitian matrix $-\I\leq H_{Z}\leq \I$ in 
 $\Minf$ such that $e^{\pi iH_Z}=Z$. By using \ref{compresion_inequality_1} again, it can be seen that we can now use the curved paths 
 $\breve{\mathbf{X}}^j:=\Ad{e^{\pi i t H_Z}}{X_j}$ to solve the problems $X_j\rightsquigarrow_{\varepsilon/2} \hat{\Psi}(X_j)$, and then we can solve 
 the problems $\hat{\Psi}(X_j)\rightsquigarrow_{\nu} Y_j$ using the flat paths $\bar{\mathbf{X}}^j:=(1-t)\hat{\Psi}(X_j)+tY_j$. We can construct the solvent interpolating paths by setting $\mathbf{X}^j:=\breve{\mathbf{X}}^j\circledast \bar{\mathbf{X}}^j$ for each $1\leq j\leq N$. This completes the proof.
\end{proof}

\subsubsection{Local connectivity of algebraic contractions}
\label{local_connectivity_results}
Given $p,q\in \CC[z]$ with $\mathbf{Z}(p)\subseteq \TT$ and $\mathbf{Z}(q)\subseteq \disk$. Let us consider the universal C$^*$-algebras 
$\mathbb{U}[p]:=C^*_1\langle u| uu^*=u^*u=1, \: p(u)=0\rangle$ and $\mathbb{D}[q]:=C^*_1\langle z| zz^*=z^*z, \: \|z\|\leq 1, \: q(z)=0\rangle$.

\begin{definition}[Algebraic contractions]
For fixed but arbitrary $p\in \CC[z]$ with $\mathbf{Z}(p)\subseteq \disk$, by 
a $p$-algebraic (or just algebraic when $p$ is clear from the context) contraction in $M_n$, we mean any element $\mathbf{D}\in \U{n}$ such that the mapping 
$z\mapsto \mathbf{D}$ induces a C$^*$-homomorphism 
$\mathbb{D}[p]\to C^*(\mathbf{D})$, where $z$ denotes the universal generator of $\mathbb{D}[p]$. If in addition, 
$\mathbf{Z}(p)\subseteq \TT$, $\mathbf{D}\mathbf{D}^*=\I_n$ and $u\mapsto \mathbf{D}$ extends to a C$^*$-homomorphism 
$\mathbb{U}[p]\to C^*(\mathbf{D})$, we call any such $\mathbf{D}$ an algebraic unitary.
\end{definition}

\begin{lemma}
 \label{existence_of_a refinement}
 Given a unital abelian C$^*$-algebra $D$ and any two finite families of pairwise orthogonal projections 
 $\mathcal{P}:=\{P_1,\ldots,P_r\}$ and $\mathcal{Q}:=\{Q_1,\ldots,Q_s\}$ in $D$ such that 
 $1_D=\sum_{j}P_j=\sum_kQ_k$, there is a family  
 $\mathcal{R}:=\{R_1,\ldots,R_t\}$ of pairwise orthogonal projections in $D$ such that 
 $\mathrm{span}\: \{\mathcal{P},\mathcal{Q}\}\subseteq \mathrm{span}\: {\mathcal{R}}$ and 
 $|\mathcal{R}|\leq \|\mathcal{P}||\mathcal{Q}|$.
\end{lemma}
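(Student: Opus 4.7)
The plan is to build $\mathcal{R}$ explicitly as the set of nonzero products $P_j Q_k$, which is the standard common refinement construction for partitions of unity by projections in an abelian algebra.

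First I would define, for each pair $(j,k)$ with $1\leq j\leq r$ and $1\leq k\leq s$, the element $R_{j,k}:=P_j Q_k\in D$. Since $D$ is abelian, $P_j$ and $Q_k$ commute, so $R_{j,k}^*=Q_k^*P_j^*=Q_kP_j=P_jQ_k$ and $R_{j,k}^2=P_j^2Q_k^2=P_jQ_k=R_{j,k}$, hence each $R_{j,k}$ is a projection. Next I would verify pairwise orthogonality: if $(j,k)\neq(j',k')$ then either $j\neq j'$, in which case $P_jP_{j'}=0$, or $k\neq k'$, in which case $Q_kQ_{k'}=0$; using commutativity in $D$ we get $R_{j,k}R_{j',k'}=P_jP_{j'}Q_kQ_{k'}=0$ in both cases.

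The partition-of-unity relations
\begin{equation*}
\sum_{j,k} R_{j,k}=\Bigl(\sum_j P_j\Bigr)\Bigl(\sum_k Q_k\Bigr)=1_D\cdot 1_D=1_D
\end{equation*}
follow from distributivity and pairwise orthogonality. More importantly, for fixed $j$,
\begin{equation*}
\sum_{k=1}^{s} R_{j,k}=P_j\sum_{k=1}^{s}Q_k=P_j\cdot 1_D=P_j,
\end{equation*}
and symmetrically $\sum_j R_{j,k}=Q_k$. Hence every $P_j$ and every $Q_k$ lies in $\mathrm{span}\{R_{j,k}\}_{j,k}$, so $\mathrm{span}\{\mathcal{P},\mathcal{Q}\}\subseteq \mathrm{span}\{R_{j,k}\mid 1\leq j\leq r,\ 1\leq k\leq s\}$.

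Finally, to produce the claimed family, I would discard the zero elements: let $\mathcal{R}:=\{R_{j,k}\mid R_{j,k}\neq 0\}=\{R_1,\ldots,R_t\}$. The relations above still hold because dropping zero summands preserves the sums, so $\mathrm{span}\{\mathcal{P},\mathcal{Q}\}\subseteq\mathrm{span}\,\mathcal{R}$, while trivially $t=|\mathcal{R}|\leq rs=|\mathcal{P}||\mathcal{Q}|$. There is no genuine obstacle here; the only mild subtlety is the inequality rather than equality in $|\mathcal{R}|\leq|\mathcal{P}||\mathcal{Q}|$, which is accounted for precisely by allowing some of the products $P_jQ_k$ to vanish (e.g.\ when the supports of $P_j$ and $Q_k$ are disjoint in the Gelfand spectrum of $D$).
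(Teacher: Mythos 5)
Your proof is correct and follows exactly the same construction as the paper: set $R_{j,k}:=P_jQ_k$ and take the common refinement. The only difference is that you spell out the verifications (that the $R_{j,k}$ are pairwise orthogonal projections, that $\sum_k R_{j,k}=P_j$ and $\sum_j R_{j,k}=Q_k$, and that discarding zeros preserves the span) which the paper leaves to the reader.
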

\begin{proof}
 Since $\mathcal{P},\mathcal{Q}\subset D$, by setting $R_{j,k}:=P_jQ_k$ it can be seen that 
 $\mathcal{P},\mathcal{Q}\subseteq \mathrm{span} \:\{R_{j,k}\}$. Let us set $\mathcal{R}:=\{R_{j,k}\}$, it can 
 be seen that $|\mathcal{R}|\leq |\mathcal{P}||\mathcal{Q}|$ and 
 $\mathrm{span}\: \{\mathcal{P},\mathcal{Q}\}\subseteq \mathrm{span}\: {\mathcal{R}}$. This completes the proof.
\end{proof}

Along the lines of the proof of \cite[P.VI.6.6]{Bhatia_mat_book} we can derive the following lemma.

\begin{lemma}
\label{existence_of_almost_unit}
 Given a unitary $W$ and a normal contraction $D$ in $M_n$, if $D=\sum_{j=1}^r \alpha_jP_j$ is diagonal for $1\leq r\in \ZZ$ and $\NS{\alpha}{r}\in \disk$, the 
 set $\{P_j\}$ consists of pairwise orthogonal diagonal projections in $M_n$ such that $\sum_{j}P_j=\I_n$, and $\alpha_j\neq \alpha_k$ whenever $k\neq j$, then there 
 is a unitary matrix $Z\in M_n$ and a constant $C$ depending on $r$ and $\sigma(D)$ such that $[Z,D]=0$ and $\|\I_n-WZ\|\leq C\|WDW^*-D\|$.
\end{lemma}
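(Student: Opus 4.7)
The plan is to split $W$ into a part $W_{\mathrm{d}}$ that commutes with $D$ and a small off-diagonal residue $W_{\mathrm{o}}$, and then replace $W_{\mathrm{d}}$ by its unitary polar factor. Set $\delta := \|WDW^* - D\|$; in the regime where $\delta$ is not small, one takes $Z = \I_n$ (then $\|\I_n - WZ\| \leq 2$ is controlled trivially by $\delta$ after choosing $C$ large enough). Assuming $\delta$ is small, decompose $W = W_{\mathrm{d}} + W_{\mathrm{o}}$ with $W_{\mathrm{d}} := \sum_{k=1}^r P_k W P_k$ and $W_{\mathrm{o}} := \sum_{k=1}^r (\I_n - P_k) W P_k$; the part $W_{\mathrm{d}}$ commutes with $D$ by construction.

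The first key step is to bound $\|W_{\mathrm{o}}\|$ by $\delta$, with a constant depending only on $r$ and $\sigma(D)$. Since the $\alpha_k$ are distinct, for each $k$ the Lagrange interpolant $f_k \in \CC[z]$ with $f_k(\alpha_j)=\delta_{jk}$ gives $P_k = f_k(D)$ and $Q_k := WP_kW^* = f_k(WDW^*)$. A standard polynomial Lipschitz estimate on normal contractions yields $\|P_k - Q_k\| \leq L_k \delta$, with $L_k = L_k(r,\sigma(D))$ governed by the minimal gap $\min_{j\neq k}|\alpha_j - \alpha_k|$ and the degree $r-1$ of $f_k$. The identity $WP_k = Q_k W$ together with $(\I_n - P_k)P_k = 0$ gives
\begin{equation*}
 (\I_n - P_k)WP_k = (\I_n - P_k)(Q_k - P_k)W,
\end{equation*}
so $\|(\I_n - P_k)WP_k\| \leq \|Q_k - P_k\| \leq L_k \delta$. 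Since the source projections $P_k$ are pairwise orthogonal and sum to $\I_n$, a standard splitting of a vector along the $P_k$ decomposition followed by Cauchy--Schwarz yields
\begin{equation*}
 \|W_{\mathrm{o}}\| \leq \sqrt{r}\,\max_{1\leq k\leq r} \|(\I_n - P_k)WP_k\| \leq C_0(r,\sigma(D)) \cdot \delta.
\end{equation*}

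The second key step is the polar-decomposition adjustment. For $\delta$ small enough that $\|W_{\mathrm{o}}\| < 1$, $W_{\mathrm{d}} = W - W_{\mathrm{o}}$ is invertible; its polar decomposition $W_{\mathrm{d}} = V|W_{\mathrm{d}}|$ produces a unitary $V$ that commutes with $D$ (because $W_{\mathrm{d}}$, and hence $|W_{\mathrm{d}}|$ and $|W_{\mathrm{d}}|^{-1}$, commute with $D$), and I set $Z := V^*$. From the identity $\||W_{\mathrm{d}}|^2 - \I_n\| \leq 2\|W_{\mathrm{o}}\| + \|W_{\mathrm{o}}\|^2$, spectral-calculus bounds on the square root and inverse of a positive operator close to $\I_n$ yield $\||W_{\mathrm{d}}|^{\pm 1} - \I_n\| \leq C_1 \|W_{\mathrm{o}}\|$, and therefore
\begin{equation*}
 \|V - W\| \leq \|W_{\mathrm{o}}\|\cdot\||W_{\mathrm{d}}|^{-1}\| + \||W_{\mathrm{d}}|^{-1} - \I_n\| \leq C_2\,\|W_{\mathrm{o}}\|.
\end{equation*}
Since $\|\I_n - WZ\| = \|W - V\|$, combining with the off-diagonal bound yields $\|\I_n - WZ\| \leq C(r,\sigma(D))\,\|WDW^* - D\|$. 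The main obstacle will be the constants' accounting: isolating the dependence of $L_k$ on $\min_{j\neq k}|\alpha_j-\alpha_k|$, justifying the $\sqrt{r}$ factor via the orthogonality of the sources, and absorbing the polar-decomposition constants $C_1,C_2$, while confirming that the resulting $C$ is independent of $n$.
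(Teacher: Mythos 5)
Your proposal is correct and follows the same overall skeleton as the paper's proof: split $W$ into a block-diagonal part $W_{\mathrm d}=\sum_j P_jWP_j$ that commutes with $D$ plus an off-diagonal remainder, bound the remainder by $\|WDW^*-D\|$, and then replace $W_{\mathrm d}$ by its unitary polar factor to obtain $Z=V^*$. The technical details diverge, though, and the paper's route is the more elementary of the two. To bound the off-diagonal part, you pass through Lagrange interpolation and a Lipschitz estimate for polynomial functional calculus on normal contractions, whereas the paper reads off the corners directly: from $WD-DW=\sum_{j\neq k}(\alpha_j-\alpha_k)P_jWP_k$ one gets $\|P_jWP_k\|\leq|\alpha_j-\alpha_k|^{-1}\|WDW^*-D\|$ immediately, and then a crude triangle-inequality sum over the $r(r-1)$ off-diagonal corners gives $\|W-W_{\mathrm d}\|\leq\frac{r(r-1)}{s}\|WDW^*-D\|$ with $s=\min_{j\neq k}|\alpha_j-\alpha_k|$. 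This avoids both the Lagrange-polynomial Lipschitz constants (which depend on the $\alpha_j$ through products of gaps, not just the minimal one) and the $\sqrt{r}$ Cauchy--Schwarz argument. For the polar adjustment, you take a single polar decomposition of $W_{\mathrm d}$ and need $\|W_{\mathrm o}\|<1$ to invert $|W_{\mathrm d}|$, which forces you to treat the large-$\delta$ regime separately; the paper instead polar-decomposes each block $W_{j,j}=V_jR_j$ in $P_jM_nP_j$ (which always exists, no invertibility needed), sets $V=\sum_jV_j$, and uses the one-line contraction estimate $\|R_j-P_j\|\leq\|R_j^2-P_j\|$ to get $\|X-V\|\leq\|X^*X-W^*W\|\leq 2\|W-X\|$ and hence $\|V-W\|\leq 3\|W-X\|$, with no case split. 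The net effect is a fully explicit constant $C=\frac{3r(r-1)}{s}$ in the paper, versus the less transparent accumulation of Lipschitz and spectral-calculus constants in your version. Both arguments are valid, but you might note the direct corner bound as a simplification of your first step and the block-wise polar decomposition as a way to dispense with the $\delta$-small hypothesis.
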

\begin{proof}
 Since there are $r$ mutually orthogonal 
 projections $\mathbf{0}_n\leq P_1,\ldots,P_r\leq \I_n$ in $M_n$ such that $\sum_{j}P_j=\I_n$ and 
 $D:=\sum_{j}\alpha_jP_j$ with $\alpha_j\in \disk$. By setting 
 $W_{j,k}:=P_jWP_k$, we will have that $W$ has a decomposition $W=\sum_{j,k}W_{j,k}$ and it can be seen that 
\begin{eqnarray}
 \|WDW^*-D\|&=&\|WD-DW\|\\
            &=&\|\sum_{j,k}(\alpha_j P_jW_{j,k}-\alpha_k W_{j,k}P_k\|\\
            &=&\|\sum_{j,k}(\alpha_j-\alpha_k)W_{j,k}\|.
\end{eqnarray}
Hence, for $j\neq k$,
\begin{eqnarray}
 \|W_{j,k}\|&\leq& \frac{1}{|\alpha_j-\alpha_k|}\|WDW^*-D\|\\
            &\leq& \max_{j,k}\left\{\frac{1}{|\alpha_j-\alpha_k|}\right\}\|WDW^*-D\|.
\end{eqnarray}
Hence, by setting $s=\min_{j,k}|\alpha_j-\alpha_k|$ we will have that
\[
 \left\|W-\sum_{j}W_{j,j}\right\|\leq \frac{r(r-1)}{s}\|WDW^*-D\|.
\]
Let $X:=\sum_{j}W_{j,j}=\sum_{j}P_jWP_j$. Hence $\|X\|\leq \|W\|=1$. Let 
$W_{j,j}:=V_jR_j$ be the polar decomposition of $W_{j,j}$, with $V_jV_j^*=V_j^*V_j=P_j$, $R_j\geq \mathbf{0}_n$ and 
$[P_j,V_j]=[P_j,R_j]=0$. Then
\[
 \|W_{j,j}-V_j\|=\|R_j-P_j\|\leq \|R_j^2-P_j\|,
\]
since $R_j$ is a contraction. Let $V:=\sum_{j}V_j$. Then $V\in \U{n}$ and from the above inequality, we see that 
\[
 \|X-V\|\leq \|X^*X-\I_n\|=\|X^*X-W^*W\|.
\]
Hence,
\begin{eqnarray}
\|V-W\|&\leq&\|V-X\|+\|X-W\|\leq\|W-X\|+\|X^*X-W^*W\|\\
          &\leq&\|W-X\|+\|(X^*-W^*)X\|+\|W^*(X-W)\|\\
          &\leq&3\|W-X\|\leq \frac{3r(r-1)}{s}\|WDW^*-D\|.       
\end{eqnarray}
By setting $Z:=V^*$ and $C:=\frac{3r(r-1)}{s}$, it can be seen that $\|\I_n-WZ\|=\|V-W\|\leq C\|WDW^*-D\|$ and 
also that $[Z,D]=[V,D]=0$. This completes the proof.
\end{proof}

\begin{remark}\label{finite_order_remark}
 Given any normal contraction $D$ such that $p(D)=\mathbf{0}_n$ for some $p\in \CC[z]$ with $\deg(p)\leq r$, we will have that there are at most 
 $r$ complex numbers $\NS{\alpha}{r}\in \disk$ 
 and $r$ pairwise orthogonal projections 
 $P_1,\ldots,P_r$ such that $p(\alpha_j)=0$, $\sum_jP_j=\I$ and $U=\sum_{j}\alpha_jP_j$. 
\end{remark}

\begin{lemma}[Local algebraic contractive connectivity]
\label{local_connectivity_N_tuples_commuting_contractions}
 Given any $\varepsilon\geq 0$ and $N$ polynomials $\NS{p}{N}\in \CC[z]$, there is $\delta\geq 0$ such that for 
 any integer $n\geq 1$ and any $2N$ normal contractions $X_{1},\ldots,X_{N}$ ,$Y_{1},\ldots,Y_{N}$ in $M_{n}$ which satisfy the relations 
  \begin{eqnarray*}
  \left\{
  \begin{array}{l}
   [X_{j},X_{k}]=[Y_{j},Y_{k}]=0,\\
   p_j(X_{j})=p_j(Y_j)=\mathbf{0}_n,\\
   \|X_{k}-Y_{k}\|\leq \delta,
  \end{array}
 \right.
 \end{eqnarray*}
for each $1\leq j,k \leq N$, there are $N$ local analytic matrix homotopies 
$Z^1,\ldots,Z^N$ in $M_{n}$ which solve the interpolation problems 
 \[
 X_{k}\rightsquigarrow Y_{k}, \:\:\: 1\leq k\leq N,
 \]
 and also satisfy the relations 
 \begin{eqnarray*}
  \left\{
  \begin{array}{l}
     [Z^j_{t},Z^k_{t}]=\mathbf{0}_n,\\
   p_j(Z^j_t)=\mathbf{0}_n,\\
   (Z^j_{t})^*Z^j_{t}=Z^j_{t}(Z^j_{t})^*,\\
   \|Z^j_t-Y_j\|\leq \varepsilon,
  \end{array}
 \right.
 \end{eqnarray*}
for each $t\in\mathbb{I}$ and each $1\leq j,k\leq N$.
\end{lemma}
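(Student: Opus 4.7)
The strategy is to reduce the interpolation to a single analytic unitary conjugation path, by upgrading an approximate isospectral intertwining (supplied by Lemma \ref{Joint_spectral_variation_inequality_2}) to an \emph{exact} identity, using the discrete spectral structure imposed by $p_j(X_j)=p_j(Y_j)=\mathbf{0}_n$.

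First I would invoke Lemma \ref{Joint_spectral_variation_inequality_2} on the commuting families $\NS{X}{N}$ and $\NS{Y}{N}$ to obtain a $C^*$-homomorphism $\Psi$ with $\sigma(\Psi(X_j))=\sigma(X_j)\subseteq \mathbf{Z}(p_j)$, $[\Psi(X_j),Y_j]=0$, and $\max\{\|\Psi(X_j)-X_j\|,\|\Psi(X_j)-Y_j\|\}\leq \varepsilon'$ for any preassigned $\varepsilon'>0$, provided $\delta$ is chosen accordingly. Let $\rho:=\min_j\min\{|a-b|:a,b\in \mathbf{Z}(p_j)\cap \disk,\ a\neq b\}$ be the minimum spectral gap. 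Since $\Psi(X_j)$ and $Y_j$ commute and are normal with spectra contained in $\mathbf{Z}(p_j)$, they admit a common diagonalization in their joint abelian algebra; every diagonal entry of $\Psi(X_j)-Y_j$ lies in $\mathbf{Z}(p_j)-\mathbf{Z}(p_j)$ and is therefore either zero or of modulus at least $\rho$. Choosing $\varepsilon'<\rho/2$ thus forces the \emph{exact} identity $\Psi(X_j)=Y_j$ for every $j$, and being spectrum- and multiplicity-preserving, $\Psi$ is realized as $\mathrm{Ad}[W]$ for some unitary $W\in M_n$.

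Next I would construct a nearby generator $\hat{X}=\sum_k\beta_k R_k$, with distinct $\beta_k\in \disk$, for the abelian algebra $C^*(\NS{X}{N})$, by iterating Lemma \ref{existence_of_a refinement} on the spectral projections of the $X_j$'s (the count $|\{R_k\}|$ is bounded by $\prod_j\deg(p_j)$). Because $\hat{X}$ is a fixed polynomial in the $X_j$'s, $W\hat{X}W^*$ equals the same polynomial evaluated at the $\Psi(X_j)=Y_j$, whence $\|W\hat{X}W^*-\hat{X}\|=O(\delta)$. Applying Lemma \ref{existence_of_almost_unit} to the pair $(\hat{X},W)$ yields a unitary $Z$ commuting with $\hat{X}$, and hence with every $X_j$, such that $\|\I-WZ\|=O(\delta)$, with implied constant depending only on the $\deg(p_j)$ and the chosen $\beta_k$. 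Writing $U:=WZ=e^{iH}$ with $\|H\|$ correspondingly small, one has $UX_jU^*=WX_jW^*=\Psi(X_j)=Y_j$, and the analytic curved paths $Z^j_t:=e^{itH}X_je^{-itH}$ furnish the desired homotopies: each $Z^j_t$ is normal, the identities $[Z^j_t,Z^k_t]=\mathbf{0}_n$ and $p_j(Z^j_t)=e^{itH}p_j(X_j)e^{-itH}=\mathbf{0}_n$ hold by conjugation invariance, and $\|Z^j_t-Y_j\|\leq 2\|H\|+\delta$ is bounded by $\varepsilon$ upon initial calibration of $\delta$ and $\varepsilon'$.

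The main obstacle is the exact intertwining claim $\Psi(X_j)=Y_j$. Without it, the natural continuation would be to concatenate a flat interpolation as in Lemma \ref{GUJC_connectivity}, but a convex combination of two commuting elements of $\mathbf{Z}(p_j)$ generically produces intermediate spectra in the \emph{convex hull} of $\mathbf{Z}(p_j)$, violating $p_j(Z^j_t)=\mathbf{0}_n$. It is precisely the spectral gap $\rho$ of the discrete zero set that converts the soft GUJC framework into a spectrum-preserving one.
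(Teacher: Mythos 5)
Your proposal is correct and follows essentially the same route as the paper's proof: both hinge on the same three technical lemmas (the isospectral approximant of Lemma \ref{Joint_spectral_variation_inequality_2}, the refinement Lemma \ref{existence_of_a refinement} to build a nearby generator, and the almost-commuting-unitary Lemma \ref{existence_of_almost_unit} to produce the correcting unitary $Z$), and both use the spectral gap of $\mathbf{Z}(p_j)$ to upgrade the approximate intertwining $\|\Psi(X_j)-Y_j\|\leq\delta$ to the exact identity $\Psi(X_j)=Y_j$, which is precisely what keeps the path on the variety $p_j=0$.

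The only real divergences are cosmetic. Where the paper takes $\hat{X}$ to be a $\delta/2$-perturbation of one $X_j$ (so that $\Psi(\hat{X})\in\mathrm{span}\:\mathcal{P}$ with eigenvalues clustered near $\sigma(X_j)$), you assign the eigenvalues $\beta_k$ freely with a prescribed gap; this makes the $n$-independence of the constant in Lemma \ref{existence_of_almost_unit} a bit more transparent, at the small cost that $\hat{X}$ is no longer a ``nearby generator'' in the paper's normed sense --- which does not matter, since only the commutant of $\hat{X}$ and the size of $\|W\hat{X}W^*-\hat{X}\|$ enter the argument. And where the paper closes by invoking the \textbf{GUJC} condition and Lemma \ref{GUJC_connectivity}, you write the curved path $Z^j_t=e^{itH}X_je^{-itH}$ directly; this is equivalent here because the flat segment in Lemma \ref{GUJC_connectivity} degenerates to a constant once $\hat{\Psi}(X_j)=Y_j$ exactly, and your direct construction makes the claimed analyticity (rather than mere piecewise analyticity) manifest. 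Your closing remark about why a nontrivial flat segment would push intermediate spectra into the convex hull of $\mathbf{Z}(p_j)$ correctly identifies the obstruction the spectral-gap argument is designed to remove.

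One small point worth tightening: Lemma \ref{Joint_spectral_variation_inequality_2} as stated only produces ``a $C^*$-homomorphism,'' so to write $\Psi=\mathrm{Ad}[W]$ you should either cite the inner-automorphism form of the lemma from the source \cite{Vides_homotopies}, or note explicitly that a unital $*$-homomorphism $M_n\to M_n$ is automatically an inner automorphism.
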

\begin{proof}
 By changing basis if necessary, we can assume that $\NS{Y}{N}$ are diagonal matrices. Let us set 
 $K:=1+\prod_{j=1}^N \max\{1,\deg(p_j)\}$ and let us consider the sets $\mathbf{Z}(p_j)=\{z\in \disk| p_j(z)=0\}$, $1\leq j\leq N$. 
 Given $\varepsilon>0$, there is 
 $\delta>0$ that can be chosen so that
 \begin{equation}
 \delta\leq \frac{h_\delta\varepsilon}{6K(K-1)}< \min\{h_\delta,\frac{1}{2}\} \leq \frac{1}{3}\min_{1\leq j\leq N}\{\min_{x,y\in \mathbf{Z}(p_j)}\{|x-y|\:|\:x\neq y\}\}
 \end{equation}
 with $h_\delta\leq \frac{1}{3}\min_{1\leq j\leq N}\{\min_{x,y\in \mathbf{Z}(p_j)}\{|x-y|\:|\:x\neq y\}\}$. By \cite[L.4.1]{Vides_homotopies} we will have that there 
 is an inner C$^*$-automorphism $\Psi:M_n\to M_n$ such that $[\Psi(X_j),Y_j]=0$ and $\|\Psi(X_j)-Y_j\|\leq \delta$, since 
 $\delta<\frac{1}{3}\min_{1\leq j\leq N}\{\min_{x,y\in \mathbf{Z}(p_j)}\{|x-y|\:|\:x\neq y\}\}$ and 
 $p_j(\Psi(X_j))=p_j(Y_j)=\mathbf{0}_n$ we will have that there is a unitary $\hat{W}\in \U{n}$ such that 
 $Y_j=\hat{W}X_j\hat{W}^*=\Psi(X_j)$, otherwise we get a contradiction. 
 
 By R.\ref{finite_order_remark} and by iterating on L.\ref{existence_of_a refinement} we can find a family of pairwise orthogonal projections 
 $\mathcal{P}:=\{P_1,\ldots,P_K\}$ with $\sum_{j}P_j=\I_n$ and $Y_j\in \mathrm{span}\: \mathcal{P}$ for each $1\leq j\leq N$. 
 
 By choosing any arbitrary $1\leq j\leq N$, we can now find a contractive perturbation $\hat{X}$ of $X_j$ such that 
 $\Psi(\hat{X}):=\sum_{j=1}^K \alpha_jP_j \in \mathrm{span}\:\mathcal{P}$, $\|\hat{X}-X_j\|\leq \delta/2$, 
 $\sigma(\hat{X})$ consists of $K$ distinct points and such that $\sigma(X_j)$ is $\delta/2$-dense in $\sigma(\hat{X})$. By the hypotheses of the 
 lemma, we can now obtain 
 the estimates
 \begin{eqnarray}
  \|\hat{W}\hat{X}-\hat{X}\hat{W}\|&=&\|\hat{W}(\hat{X}-X_j)-(\hat{X}-X_j)\hat{W}+\hat{W}X_j-X_j\hat{W}\|\\
                       &\leq&2\|\hat{X}-X_j\|+\|\hat{W}X_j-X_j\hat{W}\|\\
                       &\leq&2\frac{\delta}{2}+\|X_j-Y_j\|\leq 2\delta.
 \end{eqnarray}

 By L.\ref{existence_of_almost_unit} there is a 
 unitary $Z$ such that $[Z,\Psi(\hat{X})]=0$ and $\|Z-\hat{W}\|\leq \varepsilon$, and which also satisfies the relations 
 $[Z,\Psi(X_j)]=[Z,Y_j]=0$, $1\leq j\leq N$, if we set $W:=\hat{W}^*Z$, we will have that 
 $WY_jW^*=\hat{W}^*Y_j\hat{W}=\Psi^{-1}(Y_j)=X_j$, $1\leq j\leq N$. Moreover, by R.\ref{finite_order_remark} and 
 L.\ref{existence_of_almost_unit} we will have that the normal contractions 
 $\NS{X}{N}$ and $\NS{Y}{N}$ have the {\bf GUJC} condition and we can apply L.\ref{GUJC_connectivity}
to construct the paths $Z^j_t\in C([0,1],\N{n})$ that solve the interpolation problems 
$X_j \rightsquigarrow_{\varepsilon} Y_j$ together with the rest of the constraints in the statement of the lemma, and we 
are done.
\end{proof}

\subsubsection{Soft Algebraic Contractions} 
Along the lines followed in \cite{Vides_homotopies} let us now consider two particular types of matrix paths defined 
as follows.

 \begin{lemma}[Local Soft algebraic contractive connectivity]
\label{local_connectivity_N_tuples_soft_commuting_contractions}
 Given any $\varepsilon\geq 0$ and $N$ polynomials $\NS{p}{N}\in \CC[z]$, there is $\delta\geq 0$ such that for 
 any integer $n\geq 1$ and any $2N$ normal contractions $X_{1},\ldots,X_{N}$ ,$Y_{1},\ldots,Y_{N}$ in $M_{n}$ which satisfy the relations 
  \begin{eqnarray*}
  \left\{
  \begin{array}{l}
   [X_{j},Y_{k}]=[Y_{j},Y_{k}]=0,\\
   \max\{\|p_j(X_{j})\|,\|p_j(Y_j)\|\}\leq \delta,\\
   \|X_{k}-Y_{k}\|\leq \delta,
  \end{array}
 \right.
 \end{eqnarray*}
for each $1\leq j,k \leq N$, there are $N$ local piecewise analytic matrix homotopies 
$Z^1,\ldots,Z^N$ in $M_{n}$ which solve the interpolation problems 
 \[
 X_{k}\rightsquigarrow Y_{k}, \:\:\: 1\leq k\leq N,
 \]
 and also satisfy the relations 
 \begin{eqnarray*}
  \left\{
  \begin{array}{l}
     [Z^j_{t},Z^k_{t}]=\mathbf{0}_n,\\
   \|p_j(Z^j_t)\|\leq\varepsilon,\\
   (Z^j_{t})^*Z^j_{t}=Z^j_{t}(Z^j_{t})^*,\\
   \|Z^j_t-Y_j\|\leq \varepsilon,
  \end{array}
 \right.
 \end{eqnarray*}
for each $t\in\mathbb{I}$ and each $1\leq j,k\leq N$.
\end{lemma}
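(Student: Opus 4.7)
The strategy is to reduce this soft-algebraic statement to the hard-algebraic L.\ref{local_connectivity_N_tuples_commuting_contractions} by a functional-calculus hardening of both input families, and then to build the interpolating paths by concatenating a curved {\bf GUCJIA} segment in the hard regime with a flat compensating segment inside an abelian envelope.

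I would first harden the $Y$'s. Choosing $\delta$ smaller than one third of the minimum pairwise distance between distinct roots of each $p_j$, the $\delta$-sublevel set $\{z\in \disk : |p_j(z)|\leq \delta\}$ decomposes into disjoint neighborhoods of the roots of $p_j$, on which a continuous nearest-root retraction $r_j$ is well defined. Setting $\tilde{Y}_j := r_j(Y_j)$ via functional calculus yields pairwise commuting normal contractions with $p_j(\tilde{Y}_j)=\mathbf{0}_n$, $\|\tilde{Y}_j-Y_j\|\leq \eta(\delta)$ (with $\eta(\delta)\to 0$), and $[X_k,\tilde{Y}_j]=0$ inherited from $[X_k,Y_j]=0$.

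Next I would harden the $X$'s. Applying L.\ref{existence_of_a refinement} iteratively to the spectral projections of the $\tilde{Y}_j$'s, I obtain a common refinement $\{P_\alpha\}$ of mutually orthogonal projections in $C^*(\tilde{Y}_1,\ldots,\tilde{Y}_N)$ summing to $\I_n$, with $\tilde{Y}_j=\sum_\alpha \mu_j^\alpha P_\alpha$ and $\mu_j^\alpha \in \mathbf{Z}(p_j)$. Since $[X_j,\tilde{Y}_k]=0$, each $X_j$ commutes with every $P_\alpha$ and is therefore block-diagonal in this decomposition; within $\mathrm{Ran}(P_\alpha)$ the block $P_\alpha X_j P_\alpha$ is $(\delta+\eta(\delta))$-close in norm to $\mu_j^\alpha P_\alpha$. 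Defining $\tilde{X}_j := \sum_\alpha \mu_j^\alpha P_\alpha = \tilde{Y}_j$ then produces pairwise commuting algebraic contractions with $\|\tilde{X}_j-X_j\|\leq \delta+\eta(\delta)$ and $p_j(\tilde{X}_j)=\mathbf{0}_n$. With this in hand I would assemble $Z^j:=\breve{Z}^j\circledast\bar{Z}^j$, where $\bar{Z}^j_t:=(1-t)\tilde{Y}_j+tY_j$ is the flat abelian path in $C^*(Y_j)$ and $\breve{Z}^j$ runs from $X_j$ to $\tilde{X}_j$ inside the abelian envelope $C^*(X_j,\tilde{Y}_j)$. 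Normality is preserved, the soft algebraic constraint $\|p_j(Z^j_t)\|\leq \varepsilon$ follows from Lipschitz control of $p_j$ on a bounded set together with the endpoint estimates, and $\|Z^j_t-Y_j\|\leq\varepsilon$ holds by the triangle inequality.

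The main obstacle will be establishing exact pairwise commutativity $[Z^j_t,Z^k_t]=\mathbf{0}_n$ along the initial segment $\breve{Z}^j$: the hypotheses yield only $\|[X_j,X_k]\|\leq 2\delta$ (from $[X_j,Y_k]=0$ and $\|X_k-Y_k\|\leq \delta$), so a naive flat interpolant has commutator of order $O(\delta)$. I plan to circumvent this by replacing the flat segment with a curved conjugation path $\breve{Z}^j_t:=\mathrm{Ad}[e^{\pi i t H}](\tilde{X}_j)$ furnished by the {\bf GUCJIA} construction of L.\ref{GUJC_connectivity}, where $H$ is the hermitian logarithm of a near-identity unitary $W$ produced by applying L.\ref{existence_of_almost_unit} to a nearby algebraic generator $\hat{X}\in C^*(\tilde{X}_1,\ldots,\tilde{X}_N)$ whose spectral projections are precisely the $P_\alpha$ from the hardening step. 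Because the $\tilde{X}_j$'s pairwise commute by construction, simultaneous unitary conjugation preserves their mutual commutativity at each $t$, and any $O(\delta)$ discrepancy between $\breve{Z}^j_0$ and $X_j$ is absorbed into the $\varepsilon$ tolerance of the underlying $\varepsilon$-local matrix homotopy. Verifying the {\bf GUJC} compatibility $\|\I-\hat{W}^*W\|<1$ in this soft-algebraic regime then follows the perturbative scheme of L.\ref{local_connectivity_N_tuples_commuting_contractions}, using the refinement $\{P_\alpha\}$ in place of the single-matrix spectral decomposition.
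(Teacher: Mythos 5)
The crucial gap is at the start of your replacement curved segment. Once you discard the flat leg $X_j\to\tilde X_j$ in favor of $\breve Z^j_t:=\mathrm{Ad}[e^{\pi i t H}](\tilde X_j)$, your concatenation begins at $\breve Z^j_0=\tilde X_j$, not at $X_j$. This is not a discrepancy that can be ``absorbed into the $\varepsilon$ tolerance'': the definition of an $\varepsilon$-local matrix homotopy in \S\ref{geometry} fixes the endpoints exactly, and $\varepsilon$ only bounds $\|Z^j_t-Y_j\|$ along the path, not the distance from $Z^j_0$ to $X_j$. A path with the wrong initial point does not solve $X_j\rightsquigarrow Y_j$, however small the offset. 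Notice also that once you identify $\tilde X_j$ with $\tilde Y_j$, the middle curved segment $\hat X_j\rightsquigarrow\hat Y_j$ degenerates to a constant, so the whole burden of the deformation falls on the initial leg you are trying to remove, which is precisely where your commutativity obstruction lives.

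The obstruction you flag, $\|[X_j,X_k]\|\leq2\delta$ rather than $[X_j,X_k]=\mathbf 0_n$, is real under the printed hypothesis, but it makes the lemma unprovable rather than something to be worked around: the conclusion at $t=0$ reads $[Z^j_0,Z^k_0]=[X_j,X_k]=\mathbf 0_n$, so $[X_j,X_k]=0$ must already hold. The printed condition $[X_j,Y_k]=0$ should read $[X_j,X_k]=0$, matching L.\ref{local_connectivity_N_tuples_commuting_contractions}. With that corrected hypothesis the argument is simpler and does not require the degenerate identification $\tilde X_j=\tilde Y_j$: choose $\delta$ so that $\mathbf Z(p_j)$ is $\delta/2$-dense in $\sigma(X_j)\cup\sigma(Y_j)$, harden $X_j\rightsquigarrow\hat X_j$ by a flat path inside $C^*(X_j)$ and $Y_j\rightsquigarrow\hat Y_j$ by a flat path inside $C^*(Y_j)$ (these preserve pairwise commutativity because $C^*(X_j)$ and $C^*(X_k)$ commute when $[X_j,X_k]=0$), note that $\|\hat X_j-\hat Y_j\|\leq2\delta$, and then apply L.\ref{local_connectivity_N_tuples_commuting_contractions} directly to $\{\hat X_j\}$ and $\{\hat Y_j\}$ to supply the curved middle segment. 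The concatenation $(\bar X^j\circledast\breve Z^j)\circledast\bar Y^j$ then has the required exact endpoints and satisfies all the stated relations, with $\|p_j(Z^j_t)\|\leq\varepsilon$ following from the Lipschitz bound on $p_j$.
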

\begin{proof}
 By a similar perturbation argument to the one implemented in the proof of L.\ref{local_connectivity_N_tuples_commuting_contractions} and 
 by matrix Lipschitz continuity of polynomials we can find $0<\delta\leq \varepsilon/6$ such that 
 \begin{equation}
 \delta\leq \frac{1}{6}\min_{1\leq j\leq N}\{\min_{x,y\in \mathbf{Z}(p_j)}\{|x-y|\:|\:x\neq y\}\}.
 \label{spectral_constraint_1}
 \end{equation}
 and such that $\mathbf{Z}(p_j)$ is $\delta/2$-dense in $\sigma(X_j)\cup \sigma(Y_j)$. Along the lines of the proof of \cite[C.4.2]{Vides_homotopies}, we can find two flat 
 paths $\bar{X}^j,\bar{Y}^j\in C([0,1],M_{n})$ that preserve normality, commutativity and solve the problems $X_j \rightsquigarrow_{\delta/2} \hat{X}_j$ and 
 $\hat{Y}_j \rightsquigarrow_{\delta/2} Y_j$ respectively, with $p_j(\hat{X}_j)=p_j(\hat{Y}_j)=[\hat{X}_j,X_j]=[\hat{Y}_j,Y_j]=\mathbf{0}_n$. We can now use the estimate 
 $\|\hat{X}_j-\hat{Y}_j\|\leq \delta/2+\delta+\delta/2=2\delta$ together with inequality \ref{spectral_constraint_1} and 
 L.\ref{local_connectivity_N_tuples_commuting_contractions} to construct the curved paths $\breve{Z}^j\in C([0,1],\N{n})$ that solve 
 the problems $\hat{X}_j \rightsquigarrow_{\varepsilon/3} \hat{Y}_j$ for each $1\leq j\leq N$. It can be seen that the paths 
 $Z^j:=(\bar{X}^j\circledast \breve{Z}^j) \circledast \bar{Y}^j$ solve the problems $X_j\rightsquigarrow_\varepsilon Y_j$ and satisfy the constraints 
 in the statement of the lemma. This completes the proof.
\end{proof}

\begin{remark}
 The application/modification of L.\ref{local_connectivity_N_tuples_soft_commuting_contractions} to the solution of unitary connectivity problems together with some connectivity results for matrix representations of softened abelian group 
 C$^*$-algebras in the sense of Farsi \cite{Soft_Algebras_Farsi} raises interesting questions, this will be the subject of further 
 study. We can also combine L.\ref{local_connectivity_N_tuples_commuting_contractions} 
 and L.\ref{local_connectivity_N_tuples_soft_commuting_contractions} with 
 the techniques presented in \cite{Vides_homotopies} to derive some results in matrix numerical analysis and topologically controlled linear algebra. This will be the subject of 
 further study
\end{remark}

\subsection{Local Deformation of Matrix Words} 
\label{Applications}
Let us start this section by considering the following concepts.

 \begin{definition}[Local deformation of Structured Matrix C$^*$-algebras]
 Given a matrix variety $\mathcal{Z}\subseteq \Minf$, two $m$-tuples of structured normal contractions 
 $(X_,\ldots,\\
 X_m)$ and $(\NS{Y}{m})$ in $M_n^m\cap \mathcal{Z}^m$ we write $C^*(\NS{X}{m})\rightsquigarrow_{\varepsilon} C^*(\NS{Y}{m})$ if there are $\varepsilon$-local matrix 
 homotopies 
 $Z^1,\ldots,Z^m\in C([0,1],\mathcal{Z}\cap M_n)$ between $\NS{X}{m}$ and $\NS{Y}{m}$.
\end{definition}


\begin{definition}[ULPAC.] \label{def.ulac}Given a universal C$^*$-algebra $A:=C^*_1\langle\NS{x}{m},\mathcal{R}(x_1,\\
,\ldots,x_m)\rangle$, we say that 
$\Rep{A}{\Minf}$ is uniformly locally piecewise analytically connected or 
({\bf ULPAC}) if for 
any two $m$-tuples of normal contractions $(\NS{X}{m})$ and $(Y_1,\ldots,Y_m)$ in 
 $M_n^m$ we have that if $C^*(\NS{X}{m}) \twoheadleftarrow A\twoheadrightarrow C^*(Y_1,\ldots,Y_m)$, then we have that
 $X_j \rightsquigarrow_{\varepsilon(\delta),\hat{X}^j} Y_j$, for some function $\varepsilon:\RR^+_0\to \mathbb{R}_0^+$ 
 and some piecewise analytic contractive normal matrix path $\hat{X}^j$, with 
 $\delta:=\max_j\{\|X_j-Y_j\|\}$. Moreover, $A\twoheadrightarrow C^*(\hat{X}_{t}^1,\ldots,\hat{X}_{t}^N)$, for each 
 $0\leq t\leq 1$.
\end{definition}

\begin{definition}[AULPAC.] Given a universal C$^*$-algebra $A:=C^*_1\langle\NS{x}{m},\\
\mathcal{R}(x_1,\ldots,x_m)\rangle$, we say that 
$\Rep{A}{\Minf}$ is approximately uniformly locally piecewise analytically connected ({\bf AULPAC}) if for 
any two $m$-tuples of normal contractions $(\NS{X}{m})$ and $(\NS{Y}{m})$ in 
 $M_n^m$ we have that if $C^*(\NS{X}{m}) \twoheadleftarrow A\twoheadrightarrow C^*(\NS{Y}{m})$, then there is 
 a C$^*$-homomorphism $\Phi:M_n\to M_{kn}$, $k\geq 1$ such that $\max\{\|\Phi(X_j)-\I_k\otimes X_j\|,\|\Phi(X_j)-\I_k\otimes Y_j\|\}\leq \varepsilon(\delta)$ and 
 $\Phi(X_j) \rightsquigarrow_{\varepsilon(\delta),\hat{X}^j} \I_k\otimes Y_j$, for some function $\varepsilon:\RR^+_0\to \mathbb{R}_0^+$ and some piecewise analytic contractive normal matrix path $\hat{X}^j$, with 
 $\delta:=\max_j\{\|X_j-Y_j\|\}$. Moreover, $A\twoheadrightarrow C^*(\hat{X}_{t}^1,\ldots,\hat{X}_{t}^N)$, for each 
 $0\leq t\leq 1$.
\end{definition}

Let us consider the universal C$^*$-algebras $\mathbb{I}^{m}$ and $\mathbb{S}_\varepsilon^{m-1}$ described in term of generators and relations by the expressions.
\begin{eqnarray}
\mathbb{I}^{m}:=
 C^*_1\left\langle h_1,\ldots,h_m \left| 
                     \begin{array}{l}
                       -1\leq h_j\leq 1, \: [h_j,h_k]=0
                      \end{array}, 1\leq j,k\leq m
\right.\right\rangle,\nonumber\\
\label{universal_picture_of_Bm}
\\
\mathbb{S}^{m-1}_\varepsilon:=
 C^*_1\left\langle h_1,\ldots,h_m \left| 
                     \begin{array}{l}
                       -1\leq h_j\leq 1, \:[h_j,h_k]=0 \\
                       \|h_1^2+\cdots+h_m^2-1\|\leq \varepsilon
                      \end{array}, 1\leq j,k\leq m
\right.\right\rangle.\nonumber\\
\label{universal_picture_of_Sm}
\end{eqnarray}

Given $\varepsilon>0$ and a $m$-set of polynomials 
$p_1,\ldots,p_{m}\in \CC[z]$ (with $m\geq 2$) let us denote by $\mathbb{I}^{m}_\varepsilon[p_1,\ldots,p_{m}]$ and $\mathbb{S}^{m-1}_\varepsilon[p_1,\ldots,p_{m}]$ 
the universal C$^*$-algebras defined by
\begin{eqnarray}
\mathbb{I}^{m}_\varepsilon[p_1,\ldots,p_{m}]:=
 C^*_1\left\langle h_1,\ldots,h_m \left| 
                     \begin{array}{l}
                       -1\leq h_j\leq 1, \:[h_j,h_k]=0 \\
                       \|p_j(h_j)\|\leq\varepsilon
                      \end{array}, 1\leq j,k\leq m
\right.\right\rangle,\nonumber\\
\label{universal_picture_of_alg_Bm}\\
\mathbb{S}^{m-1}_\varepsilon[p_1,\ldots,p_{m}]:=
 C^*_1\left\langle h_1,\ldots,h_{m} \left| 
                     \begin{array}{l}
                       -1\leq h_j\leq 1, \:[h_j,h_k]=0 \\
                       \|h_1^2+\cdots+h_{m}^2-1\|\leq \varepsilon\\
                       \|p_j(h_j)\|\leq \varepsilon
                      \end{array}, 1\leq j,k\leq m
\right.\right\rangle.\nonumber\\
\label{universal_picture_of_alg_Sm}
\end{eqnarray}
We will refer to $\mathbb{I}^{m}$ and $\mathbb{I}^{m}_\varepsilon[p_1,\ldots,p_{m}]$ as {\bf Semi-Soft m-Cubes}, we will refer 
$\mathbb{S}^{m-1}_\varepsilon$ and $\mathbb{S}^{m-1}_\varepsilon[p_1,\ldots,p_{m}]$ as {\bf Semi-Soft m-Spheres}.

Since by R.\ref{controllability_remark} any function on matrix words of the form \ref{matrix_function_on_words} safisfies the controllability conditions of 
D.\ref{def_local_controllability}, we will have that the {\bf curved} local {\bf connectivity/deformation} of matrix words on normal variables can be reduced 
to the following results.

\begin{theorem}
 For any integer $m\geq 2$ we have that $\Rep{\mathbb{I}^{m}_\varepsilon[p_1,\ldots,p_m]}{\Minf}$ is {\bf ULPAC}.
\end{theorem}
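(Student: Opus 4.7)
The plan is to reduce this statement to L.\ref{local_connectivity_N_tuples_soft_commuting_contractions}. A representation of $\mathbb{I}^{m}_\varepsilon[p_1,\ldots,p_m]$ in $M_n$ is exactly an $m$-tuple $(X_1,\ldots,X_m)$ of pairwise commuting hermitian contractions with $\|p_j(X_j)\|\leq \varepsilon$ for every $j$. Given two such tuples $(X_1,\ldots,X_m)$ and $(Y_1,\ldots,Y_m)$, and setting $\delta := \max_j\|X_j-Y_j\|$, the goal of the \textbf{ULPAC} condition is to produce commutativity-preserving piecewise analytic paths $\hat{X}^j\in C([0,1],M_n)$ from $X_j$ to $Y_j$ with $\|\hat{X}^j_t - Y_j\|\leq \varepsilon(\delta)$ such that each intermediate tuple is still a representation of the same universal C$^*$-algebra.

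First I would apply L.\ref{local_connectivity_N_tuples_soft_commuting_contractions} with softness parameter $\tilde{\delta}:=\max\{\delta,\varepsilon\}$. The hypotheses are met: both tuples are pairwise commuting normal (in fact hermitian) contractions, they are pointwise $\delta$-close, and each of $\|p_j(X_j)\|$ and $\|p_j(Y_j)\|$ is bounded by $\varepsilon\leq \tilde{\delta}$. The lemma outputs piecewise analytic paths $Z^j\in C([0,1],M_n)$ of pairwise commuting normal contractions solving $X_j\rightsquigarrow Y_j$ with $\|Z^j_t-Y_j\|\leq \varepsilon(\tilde{\delta})$. The fact that each $Z^j_t$ is hermitian (and not merely normal) follows because the path in the lemma has the form of a concatenation $\bar{X}^j\circledast \breve{Z}^j\circledast \bar{Y}^j$ in which the flat segments are built by functional calculus with real-valued functions on hermitian operators and the curved middle segment has the form $\mathrm{Ad}[e^{\pi i t H_Z}](\hat{X}_j)$, both of which preserve the self-adjoint cone; hence each intermediate $m$-tuple $(Z^1_t,\ldots,Z^m_t)$ is a pairwise commuting family of hermitian contractions.

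The main obstacle is verifying the soft polynomial inequality $\|p_j(Z^j_t)\|\leq \varepsilon$ (the algebra's own tolerance) along the entire path, so that the surjection $\mathbb{I}^{m}_\varepsilon[p_1,\ldots,p_m]\twoheadrightarrow C^*(Z^1_t,\ldots,Z^m_t)$ holds for every $t\in[0,1]$. On the curved segment this is automatic: the perturbation step in L.\ref{local_connectivity_N_tuples_soft_commuting_contractions} produces matrices $\hat{X}_j,\hat{Y}_j$ with $p_j(\hat{X}_j)=p_j(\hat{Y}_j)=\mathbf{0}_n$, and since $\mathrm{Ad}[e^{\pi i t H_Z}]$ is a $C^*$-automorphism, $p_j(\breve{Z}^j_t)=\mathbf{0}_n$ identically. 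On the two flat segments the joint spectrum is moved by functional calculus from a point with $|p_j|\leq \varepsilon$ to an exact root of $p_j$; by selecting this functional-calculus path so that each eigenvalue moves monotonically toward its nearest root of $p_j$ and invoking matrix Lipschitz continuity of $p_j$, the bound $\|p_j(Z^j_t)\|\leq \varepsilon$ is preserved throughout. Setting $\hat{X}^j := Z^j$ and absorbing the remaining constants into $\varepsilon(\delta)$ yields the required piecewise analytic, commutativity- and softness-preserving interpolating paths, establishing the \textbf{ULPAC} property for $\Rep{\mathbb{I}^{m}_\varepsilon[p_1,\ldots,p_m]}{\Minf}$.
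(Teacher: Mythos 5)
Your approach coincides with the paper's: both reduce the theorem directly to L.\ref{local_connectivity_N_tuples_soft_commuting_contractions}, observing that a representation of $\mathbb{I}^{m}_\varepsilon[p_1,\ldots,p_m]$ in $M_n$ is exactly a tuple of pairwise commuting hermitian contractions with $\|p_j(\cdot)\|\leq\varepsilon$. You go further than the paper in making explicit two points it asserts without elaboration — that the interpolating paths produced by that lemma can be taken hermitian (flat segments are real functional calculus of hermitian matrices, the curved segment is conjugation by a unitary), and that the soft relation $\|p_j(Z^j_t)\|\leq\varepsilon$ persists along the whole path so that each intermediate tuple still factors through $\mathbb{I}^m_\varepsilon[p_1,\ldots,p_m]$ — which is a useful clarification, although note that your phrase \textquotedblleft apply the lemma with softness parameter $\tilde{\delta}$\textquotedblright{} is slightly off, since in the lemma the $\delta$ is existentially produced from a chosen tolerance rather than supplied as an input.
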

\begin{proof}
Since we have that $\mathbb{I}^{m}_\varepsilon[p_1,\ldots,p_m]$ is described by \ref{universal_picture_of_alg_Bm}. It can be seen that 
 $\|p_j(\rho_n(h_j))\|\leq \varepsilon$ for $\rho_n\in \Rep{\mathbb{I}^{m}_\varepsilon[p_1,\ldots,p_m]}{M_n}$ and each $1\leq j\leq m$. 
 By L.\ref{local_connectivity_N_tuples_soft_commuting_contractions}, for any given $\varepsilon>0$, there is $\delta> 0$ such that 
 for any two representations $M_n\supseteq C^*(\NS{H}{m}) \twoheadleftarrow \mathbb{I}^{m}_\varepsilon[p_1,\ldots,p_{m}] 
 \twoheadrightarrow C^*(\NS{K}{m})\subseteq M_n$, with $\|H_j-K_j\|\leq \delta$ for each $1\leq j\leq m$, there are 
 $m$ hermitian contractive paths $Z_{j}\in C([0,1],\mathbb{H}(n))$ that solve the problems 
 $H_j \rightsquigarrow_{\varepsilon} K_j$, such that $\mathbb{I}^{m}_\varepsilon[p_1,\ldots,p_m] 
 \twoheadrightarrow C^*(Z_{1,t},\ldots,Z_{m,t})\subseteq M_n$ for each $0\leq t\leq 1$. This completes the proof.
\end{proof}

\begin{theorem}
 For any integer $m\geq 2$ we have that $\Rep{\mathbb{S}^{m-1}_\varepsilon[p_1,\ldots,p_m]}{\Minf}$ is {\bf ULPAC}.
\end{theorem}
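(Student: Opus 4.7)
The plan is to mirror the preceding Semi-Soft Cube theorem, exploiting the canonical surjection $\mathbb{S}^{m-1}_\varepsilon[p_1,\ldots,p_m]\twoheadrightarrow\mathbb{I}^{m}_\varepsilon[p_1,\ldots,p_m]$: first produce the interpolating paths via L.\ref{local_connectivity_N_tuples_soft_commuting_contractions} for the cube structure, then verify that the extra soft sphere relation is approximately preserved along the resulting matrix path.

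Concretely, I would take $\rho_n,\sigma_n\in\Rep{\mathbb{S}^{m-1}_\varepsilon[p_1,\ldots,p_m]}{M_n}$ with $H_j:=\rho_n(h_j)$, $K_j:=\sigma_n(h_j)$, and $\max_j\|H_j-K_j\|\leq\delta$. By \ref{universal_picture_of_alg_Sm} these are pairwise commuting hermitian contractions satisfying $\|p_j(H_j)\|,\|p_j(K_j)\|\leq\varepsilon$ together with the sphere bounds $\|\sum_j H_j^2-\I_n\|\leq\varepsilon$ and $\|\sum_j K_j^2-\I_n\|\leq\varepsilon$. Invoking L.\ref{local_connectivity_N_tuples_soft_commuting_contractions} with parameters tuned to $\varepsilon$ and $m$ yields piecewise analytic hermitian contractive matrix paths $Z_j\in C([0,1],\HH{n})$ solving $H_j\rightsquigarrow K_j$, preserving pairwise commutativity, satisfying $\|p_j(Z_{j,t})\|\leq\varepsilon$, and obeying $\|Z_{j,t}-K_j\|\leq\varepsilon'$ for every $t\in[0,1]$, where $\varepsilon'$ can be made arbitrarily small by shrinking $\delta$.

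The one genuinely new step is the sphere estimate along the path. Writing $Z_{j,t}^2-K_j^2=Z_{j,t}(Z_{j,t}-K_j)+(Z_{j,t}-K_j)K_j$ and using contractivity of both factors yields $\|Z_{j,t}^2-K_j^2\|\leq 2\|Z_{j,t}-K_j\|\leq 2\varepsilon'$, so a triangle inequality gives
\begin{align*}
\left\|\sum_{j=1}^m Z_{j,t}^2-\I_n\right\|
\leq \sum_{j=1}^m\|Z_{j,t}^2-K_j^2\|+\left\|\sum_{j=1}^m K_j^2-\I_n\right\|
\leq 2m\,\varepsilon'+\varepsilon.
\end{align*}
Choosing $\delta$ small enough that $2m\,\varepsilon'$ is absorbed into the fixed sphere tolerance, universality of \ref{universal_picture_of_alg_Sm} then delivers the representations $\mathbb{S}^{m-1}_\varepsilon[p_1,\ldots,p_m]\twoheadrightarrow C^*(Z_{1,t},\ldots,Z_{m,t})$ at every $t\in[0,1]$, which is exactly the ULPAC conclusion.

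The main obstacle will be the bookkeeping of interacting soft-tolerance constants, ensuring that the $2m\,\varepsilon'$ overhead picked up by the sphere estimate is swallowed inside the algebra's fixed tolerance $\varepsilon$. This is handled by the matrix-Lipschitz continuity of $x\mapsto x^2$ on contractions recorded in R.\ref{controllability_remark} together with the freedom to shrink the threshold $\delta$ in L.\ref{local_connectivity_N_tuples_soft_commuting_contractions}; in particular, rescaling the lemma's $\varepsilon$-parameter to $\varepsilon/(2m+1)$ and absorbing the resulting constant into the ULPAC function $\varepsilon(\delta)$ closes the estimate.
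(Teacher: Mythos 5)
Your plan follows the same strategy as the paper: invoke L.\ref{local_connectivity_N_tuples_soft_commuting_contractions} to produce the commutativity-preserving, polynomial-softening hermitian paths $Z_{j}$, and then argue that these paths stay in the matrix variety of $\mathbb{S}^{m-1}_\varepsilon[p_1,\ldots,p_m]$. Where you go beyond the paper is in attempting to actually verify the soft sphere relation $\|\sum_j Z_{j,t}^2-\I_n\|\leq\varepsilon$ along the path; the paper's proof simply asserts the surjection $\mathbb{S}^{m-1}_\varepsilon[p_1,\ldots,p_m]\twoheadrightarrow C^*(Z_{1,t},\ldots,Z_{m,t})$ without checking the extra relation, even though L.\ref{local_connectivity_N_tuples_soft_commuting_contractions} controls only the relations $[Z^j_t,Z^k_t]=0$, $\|p_j(Z^j_t)\|\leq\varepsilon$, normality, and the $\varepsilon$-distance to $Y_j$. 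Your instinct to supply the missing verification is exactly right.

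However, the verification you give does not quite close. Your triangle inequality yields
\[
\left\|\sum_{j=1}^m Z_{j,t}^2-\I_n\right\|\leq 2m\,\varepsilon' + \left\|\sum_{j=1}^m K_j^2-\I_n\right\|,
\]
and while the first summand can be made small by shrinking $\delta$, the second is bounded only by the \emph{fixed} tolerance $\varepsilon$ appearing in the definition of $\mathbb{S}^{m-1}_\varepsilon$, and may be tight. Consequently the bound $2m\varepsilon'+\varepsilon$ strictly exceeds $\varepsilon$, and rescaling the lemma's deviation parameter to $\varepsilon/(2m+1)$ does not repair this: that rescaling adjusts the ULPAC function $\varepsilon(\delta)$, not the algebra's tolerance $\varepsilon$ in relation \ref{universal_picture_of_alg_Sm}. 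These two $\varepsilon$'s are conflated in your last paragraph. A cleaner route (also missing from the paper) is to observe that the \emph{curved} portion of the L.\ref{local_connectivity_N_tuples_soft_commuting_contractions} path is a conjugation $\Ad{e^{\pi i t H}}{\cdot}$, which transforms $\sum_j Z_{j,t}^2-\I_n$ by the same conjugation and therefore preserves its norm exactly; the only potential violation arises on the two short flat segments, which requires a more careful choice of the nearby algebraic contractions $\hat{X}_j,\hat{Y}_j$ (or of the admissible tolerance) to handle. As it stands, your argument — and the paper's — leaves this point open.
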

\begin{proof}
Since we have that $\mathbb{S}^{m-1}_\varepsilon[p_1,\ldots,p_m]$ is described by \ref{universal_picture_of_alg_Sm}. It can be seen that 
 $\|p_j(\rho_n(h_j))\|\leq \varepsilon$ for $\rho_n\in \Rep{\mathbb{S}^{m-1}_\varepsilon[p_1,\ldots,p_m]}{M_n}$ and each $1\leq j\leq m$. 
 By L.\ref{local_connectivity_N_tuples_soft_commuting_contractions}, for any given $\varepsilon>0$, there is $\delta> 0$ such that 
 for any two representations $M_n\supseteq C^*(\NS{H}{m}) \twoheadleftarrow \mathbb{S}^{m-1}_\varepsilon[p_1,\ldots,p_{m}] 
 \twoheadrightarrow C^*(\NS{K}{m})\subseteq M_n$, with $\|H_j-K_j\|\leq \delta$ for each $1\leq j\leq m$, there are 
 $m$ hermitian contractive paths $Z_{j}\in C([0,1],\mathbb{H}(n))$ that solve the problems 
 $H_j \rightsquigarrow_{\varepsilon} K_j$, such that $\mathbb{S}^{m-1}_\varepsilon[p_1,\ldots,p_m] 
 \twoheadrightarrow C^*(Z_{1,t},\ldots,Z_{m,t})\subseteq M_n$ for each $0\leq t\leq 1$. This completes the proof.
\end{proof}

\begin{theorem}
 For any integer $m\geq 2$ we have that $\Rep{\mathbb{I}^{m}}{\Minf}$ is {\bf AULPAC}.
\end{theorem}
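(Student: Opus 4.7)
The plan is to reduce the claim directly to the lifted local toral connectivity theorem T.\ref{lifted_local_connection_of_N_tuples_of_normal_contractions}. Suppose we are given representations $C^*(H_1,\ldots,H_m)\twoheadleftarrow \mathbb{I}^m\twoheadrightarrow C^*(K_1,\ldots,K_m)$ in $M_n$, yielding two $m$-tuples of commuting hermitian contractions with $\max_j\|H_j-K_j\|\leq \delta$. Since hermitian contractions are in particular normal contractions satisfying the commutativity and proximity hypotheses of that theorem, given $\varepsilon>0$ I would take $\delta=\delta(\varepsilon)$ as provided by T.\ref{lifted_connectivity} and apply it to obtain a $C^*$-homomorphism $\Phi:M_n\to M_{2n}$ together with $m$ toroidal matrix links $\hat X^1,\ldots,\hat X^m\in C([0,1],M_{2n})$ that solve $\Phi(H_j)\rightsquigarrow K_j\oplus K_j$ under the commutativity, compression $\kappa(\Phi(H_j))=H_j$, and proximity estimates enunciated there.

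Taking $k=2$, I would then verify each clause of the AULPAC definition. The bound $\|\Phi(H_j)-\I_2\otimes H_j\|\leq\varepsilon$ is immediate because $\I_2\otimes H_j=H_j\oplus H_j$, and a triangle inequality step yields $\|\Phi(H_j)-\I_2\otimes K_j\|\leq \varepsilon+\delta\leq 2\varepsilon$. The contractive, commuting, piecewise analytic character of the paths $\hat X^j$ as well as the $\varepsilon$-closeness $\|\hat X_t^j-K_j\oplus K_j\|\leq \varepsilon$ are part of the conclusion of T.\ref{lifted_connectivity}. What remains is to establish the quotient $\mathbb{I}^m \twoheadrightarrow C^*(\hat X_t^1,\ldots,\hat X_t^m)$ at every $t\in[0,1]$, which amounts to checking that each $\hat X_t^j$ is a hermitian contraction and that the $\hat X_t^j$ pairwise commute.

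The hermiticity of the slices $\hat X_t^j$ is the key additional observation, since T.\ref{lifted_connectivity} only asserts normality of the path elements. Inspecting the construction behind T.\ref{lifted_connectivity} and L.\ref{GUJC_connectivity}, the toroidal matrix links are concatenations of curved paths of the form $\mathrm{Ad}[e^{\pi i t H_Z}](\cdot)$ with flat affine paths of the form $(1-t)(\cdot)+t(\cdot)$. The former preserves the hermitian cone because conjugation by a unitary does so, and the latter preserves it because affine combinations of hermitian matrices are hermitian. Since both endpoints $\Phi(H_j)$ and $K_j\oplus K_j$ are hermitian, hermiticity propagates through the concatenation, so each $\hat X_t^j$ is a hermitian contraction. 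Combined with $[\hat X_t^j,\hat X_t^k]=0$ from the lifted theorem, this yields the required quotient $\mathbb{I}^m \twoheadrightarrow C^*(\hat X_t^1,\ldots,\hat X_t^m)$ for every $t$, concluding the proof sketch.

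The main obstacle is the bookkeeping that the hermitian cone, and not merely the normal cone, is preserved throughout the curved-plus-flat construction underlying T.\ref{lifted_connectivity}; once this is carefully verified, the AULPAC definition reduces clause-by-clause to what that theorem already provides. A secondary point is confirming that the modulus $\varepsilon(\delta)$ furnished by the reduction is independent of $n$, which is already built into the statement of T.\ref{lifted_connectivity} and therefore transfers without change.
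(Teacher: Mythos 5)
Your proposal follows essentially the same route as the paper: the paper's proof is a direct citation of the lifted local toral connectivity theorem (\cite[T.4.2]{Vides_homotopies}, which is T.\ref{lifted_connectivity} here), taking $k=2$ and reading off the $C^*$-homomorphism $\Phi:M_n\to M_{2n}$ and the $m$ contractive hermitian paths $W_j\in C([0,1],\mathbb{H}(2n))$ that solve $\Phi(H_j)\rightsquigarrow \I_2\otimes K_j$, which yields all the AULPAC clauses at once. You do the same reduction, with the useful extra observation that T.\ref{lifted_connectivity} as stated in this paper only guarantees normal paths, so you explicitly verify that the underlying curved--flat construction (unitary conjugation $\mathrm{Ad}[e^{\pi i t H_Z}]$ followed by affine interpolation) preserves hermiticity when the endpoints are hermitian; the paper does not spell this out because its cited reference already produces hermitian paths directly. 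Your triangle-inequality step for $\|\Phi(H_j)-\I_2\otimes K_j\|$ and the note that the modulus $\varepsilon(\delta)$ is $n$-independent are both correct and match what the cited theorem provides.
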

\begin{proof}
Since we have that $\mathbb{I}^{m}$ is described by \ref{universal_picture_of_Bm}. By \cite[T.4.2]{Vides_homotopies}, given 
$\varepsilon>0$, there is $\delta> 0$ such that for any two representations $M_n\supseteq C^*(\NS{H}{m}) \twoheadleftarrow \mathbb{I}^{m} 
 \twoheadrightarrow C^*(\NS{K}{m})\subseteq M_n$, with $\|H_j-K_j\|\leq \delta$ for each $1\leq j\leq m$, there are 
a C$^*$-homomorphism $\Phi:M_n\to M_{2n}$ with $\max\{\|\Phi(H_j)-\I_2\otimes H_j\|,\|\Phi(H_j)-\I_2\otimes K_j\|\}\leq \varepsilon(\delta)$ and 
 $m$ contractive hermitian paths $W_{j}\in C([0,1],\mathbb{H}(2n))$ that solve the problems 
 $\Phi(H_j) \rightsquigarrow_{\varepsilon} \I_2\otimes K_j$, such that $\mathbb{I}^{m} 
 \twoheadrightarrow C^*(W_{1,t},\ldots,W_{m,t})\subseteq M_{2n}$ for each $0\leq t\leq 1$. This completes the proof.
\end{proof}

\begin{theorem}
 For any integer $m\geq 2$ we have that $\Rep{\mathbb{S}^{m-1}_\varepsilon}{\Minf}$ is {\bf AULPAC}.
\end{theorem}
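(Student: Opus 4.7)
The plan is to mirror the argument of the preceding theorem for $\mathbb{I}^{m}$, substituting the sphere defining relation for the cube relations and checking that the additional quadratic constraint is controlled along the constructed interpolating paths. First, I would invoke \cite[T.4.2]{Vides_homotopies} in precisely the same way: given a tolerance $\nu>0$, there is $\delta>0$ such that for any two representations $M_n\supseteq C^*(\NS{H}{m})\twoheadleftarrow \mathbb{S}^{m-1}_\varepsilon \twoheadrightarrow C^*(\NS{K}{m})\subseteq M_n$ with $\|H_j-K_j\|\leq \delta$ for each $1\leq j\leq m$, one obtains a $C^*$-homomorphism $\Phi:M_n\to M_{2n}$ with $\max\{\|\Phi(H_j)-\I_2\otimes H_j\|,\|\Phi(H_j)-\I_2\otimes K_j\|\}\leq \nu$, together with $m$ contractive hermitian paths $W_j\in C([0,1],\HH{2n})$ satisfying $[W_{j,t},W_{k,t}]=0$, $-\I\leq W_{j,t}\leq \I$, $\|W_{j,t}-\I_2\otimes K_j\|\leq \nu$, and $\Phi(H_j)\rightsquigarrow_{\nu}\I_2\otimes K_j$.

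Second, I would verify that the approximate sphere relation is preserved along the path. Since $(\NS{K}{m})$ factors through $\mathbb{S}^{m-1}_\varepsilon$, we have $\|\sum_{j=1}^m K_j^2-\I_n\|\leq \varepsilon$, and hence $\|\sum_{j=1}^m (\I_2\otimes K_j)^2-\I_{2n}\|\leq \varepsilon$. Applying the elementary estimate
\[
\|W_{j,t}^2-(\I_2\otimes K_j)^2\|\leq (\|W_{j,t}\|+\|\I_2\otimes K_j\|)\|W_{j,t}-\I_2\otimes K_j\|\leq 2\nu,
\]
and summing over $j$, I obtain $\|\sum_{j=1}^m W_{j,t}^2-\I_{2n}\|\leq \varepsilon+2m\nu$. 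Choosing $\delta$ so small that the resulting $\nu=\nu(\delta)$ satisfies $2m\nu(\delta)$ fits within the tolerance function $\varepsilon(\delta)$ required by the \textbf{AULPAC} definition, I conclude $\mathbb{S}^{m-1}_{\varepsilon}\twoheadrightarrow C^*(W_{1,t},\ldots,W_{m,t})\subseteq M_{2n}$ for each $0\leq t\leq 1$, completing the proof. The commutation and contractivity relations are already supplied by the $\mathbb{I}^{m}$ argument, so only the spherical relation requires new bookkeeping.

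The main obstacle I anticipate is exactly this quadratic tolerance accounting: unlike the cube $\mathbb{I}^m$, whose defining constraints are linear order conditions on the generators and are preserved automatically by contractive hermitian homotopies, the sphere relation is a sum of squares that only survives the homotopy in an approximate sense, incurring an $O(m\nu)$ inflation. This is a technical rather than fundamental obstruction, since the Lipschitz estimate of $x\mapsto x^2$ on the unit ball is uniform in $n$ and the slack can be made arbitrarily small by shrinking $\delta$; so the argument carries through essentially verbatim from the $\mathbb{I}^m$ case, with the sphere tolerance inflated by a controlled amount along the homotopy and absorbed into the \textbf{AULPAC} function $\varepsilon(\delta)$.
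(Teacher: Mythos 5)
Your proof follows the same route as the paper: both invoke \cite[T.4.2]{Vides_homotopies} to produce the $C^*$-homomorphism $\Phi:M_n\to M_{2n}$ and the $m$ contractive hermitian paths $W_j$. The paper's proof is essentially a one-line citation of that theorem, whereas you additionally attempt to verify, by a Lipschitz estimate on $x\mapsto x^2$, that the approximate sphere relation survives the homotopy. That extra bookkeeping is in the right spirit, but it actually surfaces a gap that your final sentence does not resolve.

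Concretely, your estimate gives $\|\sum_{j}W_{j,t}^2-\I_{2n}\|\leq \varepsilon+2m\nu$, which shows only that $\mathbb{S}^{m-1}_{\varepsilon+2m\nu}\twoheadrightarrow C^*(W_{1,t},\ldots,W_{m,t})$, not that $\mathbb{S}^{m-1}_{\varepsilon}$ does. The \textbf{AULPAC} definition requires the fixed algebra $A=\mathbb{S}^{m-1}_{\varepsilon}$ to surject onto the path algebra for every $t$, i.e.\ the path matrices must satisfy the $\varepsilon$-sphere relation exactly, not up to an extra $O(m\nu)$. Shrinking $\delta$ (hence $\nu$) shrinks the inflation but never kills it, so the ``absorb into $\varepsilon(\delta)$'' step conflates the tolerance function in the homotopy estimate with the fixed parameter in the universal relation; in a boundary representation with $\|\sum K_j^2-\I_n\|=\varepsilon$ the conclusion $\mathbb{S}^{m-1}_{\varepsilon}\twoheadrightarrow C^*(W_{j,t})$ fails by your own bound. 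The paper's proof glosses over this entirely by leaning on T.4.2 without checking the quadratic constraint; if that cited theorem is in fact strong enough to preserve the semialgebraic relation along the paths (rather than merely commutativity, normality, and contractivity), then both arguments close, but as written your argument needs either a sharper path construction that preserves $\|\sum W_{j,t}^2-\I\|\leq\varepsilon$ exactly, or a reformulation of the claim to allow the soft tolerance to grow.
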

\begin{proof}
Since we have that $\mathbb{S}^{m-1}_\varepsilon$ is described by \ref{universal_picture_of_Sm}. By \cite[T.4.2]{Vides_homotopies}, given 
$\varepsilon>0$, there is $\delta> 0$ such that for any two representations $M_n\supseteq C^*(\NS{H}{m}) \twoheadleftarrow \mathbb{S}^{m-1}_\varepsilon 
 \twoheadrightarrow C^*(\NS{K}{m})\subseteq M_n$, with $\|H_j-K_j\|\leq \delta$ for each $1\leq j\leq m$, there are 
a C$^*$-homomorphism $\Phi:M_n\to M_{2n}$ with $\max\{\|\Phi(H_j)-\I_2\otimes H_j\|,\|\Phi(H_j)-\I_2\otimes K_j\|\}\leq \varepsilon(\delta)$ and 
 $m$ contractive hermitian paths $W_{j}\in C([0,1],\mathbb{H}(2n))$ that solve the problems 
 $\Phi(H_j) \rightsquigarrow_{\varepsilon} \I_2\otimes K_j$, such that $\mathbb{S}^{m-1}_\varepsilon 
 \twoheadrightarrow C^*(W_{1,t},\ldots,W_{m,t})\subseteq M_{2n}$ for each $0\leq t\leq 1$. This completes the proof.
\end{proof}

\section{Hints and Future Directions}

The detection of nearby matrix representations of Semi-Soft Cubes and Spheres that can be uniformly deformed locally, provides 
interesting connections with some problems and questions in topologically controlled linear algebra raised by M. H. Freedman and 
R. Kirby, these connections togther with some related computational procedures will be studied in \cite{soft_computations}.

The geometric ideas and concepts developed by M. A. Rieffel in \cite{Vector_bundles_Rieffel} together with the techniques introduced by T. A. Loring and G. K. Pedersen in 
\cite{Smoothing_techniques} and by D. Hadwin in \cite{Lift_alg_elem_Don}, can be combined with the deformation/connectivity techniques presented in this document 
to obtain some information on the local geometric structure of $\Rep{C_\varepsilon(\mathbb{S}^2)}{\Minf}$ and 
$\Rep{C_\varepsilon([-1,1]^2\times [-1,1]^2)}{\Minf}$, these results will be presented in future communications. 

The curved nature of $\Rep{C_\varepsilon([-1,1]^2\times [-1,1]^2)}{\Minf}$ is 
particularly intersting because of its implicit relation with some equivalent formulations of Connes's embedding problem, and will be 
the subject of further study.

Some applications 
to the analysis of molecular data and related processes in chemical engineering and Graphene Nanotechnology (in the sense of \cite{NanoMesh1} and \cite{NanoMesh2}) 
are also considered as a subject of further study.

\section{Acknowledgement}
I am grateful with Terry Loring, Alexandru Chirvasitu, Moody Chu and Concepci\'on Ferrufino, for several interesting questions and 
comments that have been very helpful for the preparation of this document.

\end{document}